\documentclass{amsart}
\usepackage{amssymb,latexsym, amsmath, amscd, array, graphicx}

\usepackage{graphicx}
\usepackage{tikz-cd}

\swapnumbers
\numberwithin{equation}{section}

\theoremstyle{plain}

\newtheorem{thm}{Theorem}[section]
\newtheorem{cor}[thm]{Corollary}

\newtheorem{question}[thm]{Problem}
\newtheorem{prop}[thm]{Proposition}

\theoremstyle{definition}
\newtheorem{defin}[thm]{Definition}

\newtheorem{rem}[thm]{Remark}
\newtheorem{remark[thm]}{Remark}

\newtheorem{ex}[thm]{Example}


\def\Ker{\protect\operatorname{Ker}}


\def\Wi{\widetilde}

\def\Z{{\mathbb Z}}
\def\Q{{\mathbb Q}}
\def\R{{\mathbb R}}

\def\1{\hbox{\rm\rlap {1}\hskip.03in{\rom I}}}
\def\Bbbone{{\rm1\mathchoice{\kern-0.25em}{\kern-0.25em}
{\kern-0.2em}{\kern-0.2em}I}}


\long\def\forget#1\forgotten{} %

\begin{document}

\title[On Lipschitz cohomology of aspherical manifolds ]
{On Lipschitz cohomology of aspherical manifolds}
\author[A.~Dranishnikov]
{Alexander Dranishnikov}

\thanks{The author was supported by Simons Foundation}

\address{A. Dranishnikov, Department of Mathematics, University
of Florida, 358 Little Hall, Gainesville, FL 32611-8105, USA}
\email{dranish@math.ufl.edu}

\subjclass[2000]
{Primary 20J06,
Secondary  57N65 
}

\keywords{Lipschitz cohomology,  aspherical manifold, Berstein-Schwarz cohomology class}

\begin{abstract} 
We introduce the notion of Lipschitz cohomology classes of a group with locall coefficients and reduce the Novikov conjecture for a gropup $\Gamma$
 to the question whether the Berstein-Schwarz class $\beta_\Gamma\in H^1(\Gamma,I(\Gamma))$ is Lipschitz.
\end{abstract}

\maketitle

\section{Introduction}
The Novikov Higher Signature conjecture states that the higher signatures of closed oriented
manifolds are invariant under orientation-preserving homotopy equivalences. For aspherical manifolds it is equivalent to say that
the rational Pontryagin classes are homotopy invariant. The conjecture has a long and eventful history (see the survey~\cite{Yu}) and it is still open.

One of the approaches to the Novikov conjecture was introduced more than 30 years ago by Connes, Gromov, and Moscovici by defining 
Lipschitz cohomology classes of a discrete group and proving the Novikov conjecture for such classes~\cite{CGM}.
Their approach recovered the Novikov conjecture for all classes of groups where it was proven by that time. In particular, it gave another proof of
the Novikov conjecture for hyperbolic groups. More than 20 years ago T. Kato used Lipschitz cohomology to prove  the Novikov conjecture for
the combable groups ~\cite{K1},\cite{K2}. In~\cite{Dr} Lipschitz cohomology were used to recover Carlson-Pedersen result~\cite{CP} on the Novikov conjecture
for groups $\Gamma$ admitting certain equivariant compactification of the classifying space $E\Gamma$. 

Since then the Lipschitz cohomology approach to the Novikov conjecture seems to be
completely abandoned. This paper is an attempt to revive it.

Lipschitz cohomology classes of a discrete group were defined  in [CGM] as the collection of images under a family of  certain slant product homomorphisms
denoted $\alpha_{\cap}$ and defined by maps $\alpha:\Gamma\times P\to\R^n$ where $P$ is a parameter space. Connes, Gromov, and Moscovici 
first defined a light version of such classes called {\em proper Lipschitz cohomology}.
It turns out that in all cases where all cohomology classes of an aspherical manifold are known to be proper Lipshitz, like in the case of manifolds 
with non-positive curvature $K(M)\le0$,
they all 
belong to the image of one homomorphism $\alpha_{\cap}$. In such situation we say that
the group $\Gamma$ has {\it canonically Lipschitz cohomology}. Unfortunately the canonically Lipschitz cohomology cannot lead to a proof of the Novikov conjecture, since
it was shown in ~\cite{Dr} that there are aspherical manifolds
which do not have all its rational cohomology classes canonically Lipschitz. 

It was noticed in~\cite{CGM} that to prove that the 0-dimensional cohomology class $1\in H^0(\Gamma,\R)=\R$ is proper Lipschitz  could be a difficult problem. 
In the second half of the paper~\cite{CGM} the authors
introduced a heavier version of Lipschitz cohomology classes for which the Novikov conjecture still holds true and for which is quite easy to show that
the class  $1\in H^0(\Gamma,\R)$ is Lipschitz for any group.

In this paper we explore the possibility of proving the Novikov conjecture for a group $\Gamma$ by proving that
its Berstein-Schwarz class $\beta_\Gamma\in H^1(\Gamma,I(\Gamma))$ is Lipschitz. Here $I(\Gamma)$ is the augmentation ideal. 
Due to universality of $\beta_\Gamma$~\cite{DR},\cite{Sc} every cohomology class of $\Gamma$ is the image of some power of 
$\beta_\Gamma$ under a coefficient homomorphism.
In order to use this property of $\beta_\Gamma$  with the aim the Novikov conjecture we extend the notion of Lipschitz 
cohomology classes to cohomology  with local coefficients. Like in~\cite{CGM} we do it in two steps: First we define  proper Lipschitz cohomology 
and then general Lipschitz cohomology.

We note that, since the authors in~\cite{CGM} worked with differential forms, their definition of Lipschitz cohomology classes makes sense only for real coefficients. 
In this paper we use a combinatorial approach. For that, first we give an integral version of Connes-Gromov-Moscovici definition of proper 
Lipschitx cohomology with constant coefficients
and then
we transform it to a definition of proper
Lipschitz cohomology with coefficients in a $\Z\Gamma$-module. We prove the product theorem, the coefficient homomorphism theorem, 
and the connecting homomorphism theorem
for proper Lipschitz cohomology with local coefficients. These results allow us to reduce the Novikov conjecture to the question about proper Lipschitz property of
the 0-dimensional class $1\in H^0(\Gamma,\Z)=\Z$. As it was already noted above, this  is far from being easy.

In~\cite{CGM} the definition of proper Lipschitz cohomology classes of a group $\Gamma$ was extended by dropping at some the properness of 
$\Gamma$-action on the paramiter space $P$. 
We extend our definition to the non-proper case similarly. The bad news about  general Lipschitz cohomology with local coefficients is that the connecting
homomorpphism theorem does not hold anymore.
It means that we cannot reduce the Novikov conjecture to the above question about 0-dimensional cohomology. The good news is that
he product theorem and the coefficient homomorphism theorem still hold true.
In view of universality of $\beta_\Gamma$ we can reduce the Novikov conjecture to
the question about 1-dimensional cohomology. Namely, we reduce it to the question whether the Berstein-Schwarz class $\beta_\Gamma$ is Lipschitz. 

Also in the paper we computed the cohomology group $H^k(\Gamma,I(\Gamma)^{\otimes k})=\Z$ in the case when $\Gamma$ is the fundamental group of a closed aspherical orientable manifold. 
Thus, since the Novikov conjecture is rational, to prove it for a group $\Gamma$ it suffices to find any nonzero Lipschitz cohomology class in $H^1(\Gamma,I(\Gamma))$.

\

The paper is organized as follows. In Section 2 we give a combinatorial definition of proper Lipschitz cohomology with constant coefficients and present  examples of aspherical manifolds with
canonically proper Lipschitz integral cohomology.
In Section 3 we define the Berstein-Schwarz cohomology class, state the Universality Theorem, and carry out computations of cohomology
 of aspherical manifolds with coefficients in $I(\Gamma)^{\otimes k}$. In Section 4 first we give definition of proper and then general Lipschitz cohomology with local coefficients.
There we prove the coefficient homomorphism, the product, and the connecting homomorphism theorems.
\subsection{Some notations} We denote cohomology of a group $\Gamma$ with coefficients in $\Z\Gamma$-module $L$ as $H^n(\Gamma,L)$ and cohomology of a space
$X$ with $\pi_1(X)=\Gamma$ as $H^n(X;L)$. Thus, $H^n(B\Gamma;L)=H^n(\Gamma,L)$. For $\Z\Gamma$-module $L$ and $M$ the tensor product $L\otimes M$ means the tensor product over $\Z$ and $L\otimes_\Gamma M$ means the tensor product over $\Z\Gamma$. The tensor product of $n$ copies of $L$ over $\Z$ will be denoted as $L^{\otimes n}$.
The group of covariants of $L$ is denoted by $L_\Gamma$.

\section{Proper Lipschitz Cohomology of groups}

In~\cite{CGM} Connes-Gromov-Moscovici defined proper Lipschitz cohomology with coefficients in $\R$.
In this section we present a combinatorial definition which works for any ring, in particular for  $\Z$ and $\Q$.

\subsection{Equivariant slant product}
We recall the slant product. Let $C$ and $D$ be chain complexes~\cite{Sp}:
$$/: Hom(C\otimes D,\Z)\times D\to Hom(C,\Z)$$ is defined by the equality $(\phi/c')(c)=\phi(c\otimes c')$. Thus, if $\phi\in Hom((C\otimes D)_n,\Z)$ and $c'\in D_k$, then
$\phi/c'\in Hom(C_{n-k},\Z)$. This slant product defines the slant product for cohomology
$$
/:H^n(C\otimes D)\times H_k(D)\to H^{n-k}(C).
$$

Now we assume that the group $\Gamma$ acts on the chain complexes $C$. Then there is  a $\Gamma$-equivariant version of the slant product:
$$/: Hom_\Gamma(C\otimes D,\Z)\times D\to Hom_\Gamma(C,\Z)$$ 
where $Hom_\Gamma(C,\Z)$ denotes the group of $\Gamma$-invariant homomorphisms.

Suppose that the discrete group $\Gamma$ acts freely on a finite dimensional locally finite simplicial complex $P$ by simplicial transformations. 
Let $C$ be  a free resolution of $\Z$ over $\Z\Gamma$ and
let $D=C(P)^\Gamma$ to be the chain complex that consist of $\Gamma$-invariant infinite simplicial chains on $P$. 
Thus, $D$ is isomorphic to the chain complex of  infinite simplicial chains $C^{inf}(P/\Gamma)$ on the orbit sapce.

We consider the equivariant slant product
$$
H^n_\Gamma(C\otimes D)\times H_k(P:\Gamma) \to H^{n-k}_\Gamma(C)=H^{n-k}(\Gamma)
$$
where $H_k(P:\Gamma)=H_k(C^{inf}(P/\Gamma))$. Note that if $P/\Gamma$ is compact, then $H_k(P:\Gamma)=H_k(\Gamma)$.
\subsection{Proper Lipschitz cohomology classes}
The proper Lipschitz cohomology classes of a discrete group  $\Gamma$ were defined by Connes-Gromov-Moscovici ~\cite{CGM} by means of
the following data:
A finite-dimensional connected locally finite simplicial complex $P$ with a free  simplicial $\Gamma$-action
and a  map $\alpha:
\Gamma\times P\to\R^n$ satisfying the conditions
\begin{itemize}
\item (1) $\alpha$ is invariant with respect to the diagonal action of $\Gamma$ on
$\Gamma\times P$;
\item (2) the restriction $\alpha\mid_{\gamma\times P}$ is proper for all $\gamma\in \Gamma$;
\item (3) the restriction $\alpha\mid_{\Gamma\times p}$ is 1-Lipschitz for all $p\in P$ for the word metric on $\Gamma$.
\end{itemize}

Let $D$ be as above and let $C=C_*(\Delta^\infty(\Gamma))$ be the simplicial chain complex of the infinite simplex spanned by $\Gamma$.
Given the above map $\alpha:\Gamma\times P\to\R^n$ we define an extension $\bar\alpha:\Delta^\infty(\Gamma)\times P\to\R^n$ by means of the barycentric coordinates
$$
\bar\alpha((\sum_{i=0}^kt_i\gamma_i)\times x)=\sum_{i=0}^kt_i\alpha(\gamma_i\times x).
$$
Let $\omega$ be a singular cocycle generating $H^n_c(\R^n)$ with the support in some ball $B\subset\R^n$.
It turns out to be the cochain $\bar\alpha^*(\omega)$ is well-defined on infinite chains $C(\Gamma)\otimes D$. 
Let $\sum_{\sigma}n_\sigma$ be an infinite $\ell$-dimensional simplicial integral chain on $P$ where the sum is taken over all $\ell$-simplices
in $P$. Let $\Delta$ be a simplex in $\Delta^\infty(\Gamma)$ of dimension $k=n-\ell$.
Then by the definition
$$
\bar\alpha^*(\omega)(\Delta\otimes\sum_{\sigma}n_\sigma\sigma)=\sum_{\sigma}n_\sigma\omega(\bar\alpha|(\Delta\times\sigma))
$$
where the restriction of $\bar\alpha$ to $\Delta\times\sigma$ is denoted as $\bar\alpha|(\Delta\times\gamma\sigma)$
and  is treated as the singular $n$-chain on $\R^n$.
Here we use the staircase triangulation~\cite{L} of the product of oriented simplices $\Delta\times\sigma$ to define the simplicial chain.

By virtue of condition (2) the map $\bar\alpha:\Delta\times P\to\R^n$ is proper on $y\times P$ for each vertex $y\in\Delta$ and 
by condition (3)
the image of $\Delta\times x$ is uniformly bounded on $x\in P$. Therefore, the restriction of $\bar\alpha$ to $\Delta\times P$ is proper.
Hence there is a compact subset $D\subset P$ such that $\bar\alpha(\Delta\times(P\setminus D))\cap B=\emptyset$. 
Since $D$ intersects only finitely many simplices $\sigma$, the sum above is finite.
Thus, $\bar\alpha^*(\omega)$ is well-defined. 

If the chain $z=\sum_{\sigma}n_\sigma$ is $\Gamma$-invariant, the value $\bar\alpha^*(\omega)(\Delta\otimes\hat\sigma)$ is $\Gamma$-invariant on the first factor:
$$\bar\alpha^*(\omega)(\gamma'\Delta\otimes z)=\sum_{\hat\sigma\subset P/\Gamma}\sum_{\gamma\in\Gamma}n_\sigma\omega(\bar\alpha|(\gamma'\Delta\times\gamma\sigma))=
$$
$$
\sum_{\hat\sigma\subset P/\Gamma}\sum_{\gamma\in\Gamma}n_{\sigma}\omega(\bar\alpha|(\Delta\times(\gamma')^{-1}\gamma\sigma)=\bar\alpha^*(\omega)(\Delta\otimes z).$$
Here we split the infinite sum $z=\sum_{\sigma}n_\sigma=\sum_{\hat\sigma}\sum_{\gamma\in\Gamma}n_\sigma(\gamma\sigma)$
into the sums along different orbits $\hat\sigma=\Gamma\sigma$ of simplices.

Note that $\bar\alpha^*(\omega)$ is cocycle as the image of a cocycle.
Let $[\bar\alpha^*(\omega)$] denote the cohomology class of the cocycle $\bar\alpha^*(\omega)$.

We define the map $\alpha_\cap:H_k(P:\Gamma)\to H^{n-k}(\Gamma)$ as $$\alpha_\cap(a)=[\bar\alpha^*(\omega)]/a.$$

\begin{defin} All classes in the image $\alpha_{\cap}(H_*( P:\Gamma))\subset
H^*(\Gamma)$ are called integral {\it proper Lipschitz cohomology classes} of a group $\Gamma$.
\end{defin}

\begin{question}
Is the 0-dimensional cohomology class $1\in H^0(\Gamma,\Z)$ proper Lipschitz for every group $\Gamma$?
\end{question}
In the rest of the paper we will try to convince the reader that this is an important question in the Lipschitz cohomology theory (see Corollary~\ref{2}).

\

The above definition can be done in terms of a cellular $\Gamma$-action on $P$ and it can be done directly with use of $\bar\alpha$.
What we need for that is the finiteness condition of the sum $\sum_{\gamma\in\Gamma}\omega(\bar\alpha|(\Delta\times\gamma\sigma))$.
Suppose that a CW complex $B\Gamma$ is given a metric that agrees with its topology on every finite subcomplex. We consider the induced 
metric and CW structure on the universal cover $E\Gamma$.  
\begin{prop}
Let $B\subset\R^n$ be the unit ball centered at 0. Suppose that  a map
$\bar\alpha:E\Gamma\times P\to\R^n$ satisfies the conditions 
\begin{itemize}
\item (1) $\bar\alpha$ is invariant with respect to the diagonal action of $\Gamma$ on
$\Gamma\times P$;
\item (2) the restriction $\bar\alpha\mid_{x\times P}$ is proper for all $x\in E\Gamma$;
\item (3) the restriction $\bar\alpha\mid_{E\Gamma\times y}$ is 1-Lipschitz for all $y\in P$.
\end{itemize}
Then for every cells $e$ in $E\Gamma$ and $c$ in $P$ the intersection $\bar\alpha(e\times\gamma c)\cap B=\emptyset$ for all but finitely many $\gamma\in\Gamma$.
\end{prop}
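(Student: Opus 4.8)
The plan is to reduce the statement to the properness of $\bar\alpha$ on $e \times P$, exactly as in the simplicial case treated above, and then use local finiteness of the $\Gamma$-orbit of the cell $c$. First I would fix the cells $e \subset E\Gamma$ and $c \subset P$, and observe that since $e$ is compact and $c$ is compact, it suffices to control $\bar\alpha(e \times \gamma c)$ as $\gamma$ ranges over $\Gamma$. The key estimate comes from condition (3): for any fixed $x_0 \in e$ and any $y \in P$, the point $\bar\alpha(x, y)$ for $x \in e$ stays within bounded distance $\operatorname{diam}(e)$ (measured in the chosen metric on $E\Gamma$, restricted to $e$, which is finite since $e$ lies in a finite subcomplex) of $\bar\alpha(x_0, y)$, because $\bar\alpha|_{E\Gamma \times y}$ is $1$-Lipschitz. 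Hence
$$
\bar\alpha(e \times \gamma c) \subset N_{r}\bigl(\bar\alpha(x_0 \times \gamma c)\bigr), \qquad r = \operatorname{diam}(e),
$$
where $N_r$ denotes the $r$-neighborhood in $\R^n$.

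Next I would translate the invariance condition (1) into a statement about $\bar\alpha(x_0 \times \gamma c)$. By (1), $\bar\alpha(\gamma^{-1} x_0 \times c) = \bar\alpha(x_0 \times \gamma c)$, so the set in question equals the $\bar\alpha$-image of $\gamma^{-1}x_0 \times c$. Now apply condition (2) with $x = $ the single point $x_0$: wait — that gives properness only for a fixed point in $E\Gamma$, and here the point $\gamma^{-1} x_0$ moves. The correct move is instead to use properness on the slice over a point of $P$: pick $y_0 \in c$, and by (1) we have $\bar\alpha(\gamma^{-1} x_0 \times y_0) = \bar\alpha(x_0 \times \gamma y_0)$. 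By condition (3) applied to the slice $E\Gamma \times y_0$ together with the $1$-Lipschitz property, the points $\bar\alpha(\gamma x_0 \times y_0)$ for varying $\gamma$ are controlled by $d_{E\Gamma}(\gamma x_0, x_0)$, and since $\Gamma$ acts properly and cocompactly-compatibly on $E\Gamma$ with the given metric, only finitely many $\gamma$ bring $\gamma x_0$ into any bounded set. Combined with condition (2) — properness of $\bar\alpha|_{x_0 \times P}$, which guarantees $\bar\alpha(x_0 \times K) \cap B = \emptyset$ for $K \subset P$ outside a compact set — and local finiteness of $P$ (so the orbit $\{\gamma c\}$ meets any compact set in finitely many cells), one concludes that $\bar\alpha(x_0 \times \gamma c) \cap N_r(B) \ne \emptyset$ for only finitely many $\gamma$. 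Unwinding the neighborhood estimate then gives $\bar\alpha(e \times \gamma c) \cap B = \emptyset$ for all but finitely many $\gamma$.

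The main obstacle, and the step I would be most careful about, is the interplay between the two different metrics and the two different "slicing" directions: conditions (2) and (3) control $\bar\alpha$ in complementary directions (properness along $P$, Lipschitz along $E\Gamma$), and the argument must combine them without circularity. Concretely, the delicate point is ensuring that the metric on $E\Gamma$ (which by hypothesis only agrees with the topology on finite subcomplexes, so need not be proper globally) still restricts to something usable: one only ever needs it on the single compact cell $e$, so $r = \operatorname{diam}(e) < \infty$, and one needs that the $\Gamma$-translates $\gamma c$ form a locally finite family in $P$, which follows from local finiteness of the CW structure on $P$ together with freeness of the action. Once these finiteness inputs are isolated, the estimate $\bar\alpha(e \times \gamma c) \subset N_r(\bar\alpha(x_0 \times \gamma c))$ reduces everything to the already-established fact (the simplicial argument preceding the proposition) that $\bar\alpha|_{x_0 \times P}$ is proper, hence meets the bounded set $N_r(B)$ in a compact subset of $P$ that is crossed by finitely many $\gamma c$.
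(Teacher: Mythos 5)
Your closing paragraph is exactly the paper's proof: fix $x_0\in e$, set $r=\diam(e)$, use properness of $\bar\alpha|_{x_0\times P}$ (condition (2)) to get a compact $K_0\subset P$ outside of which $\bar\alpha(x_0,\cdot)$ avoids the $r$-neighborhood of $B$, use the $1$-Lipschitz condition (3) to inflate this to $\bar\alpha(e\times(P\setminus K_0))\cap B=\emptyset$, and use local finiteness of $P$ (equivalently, properness of the free simplicial $\Gamma$-action on $P$) to conclude that only finitely many translates $\gamma c$ meet $K_0$. The paper phrases the same inflation with nested balls $B(0,2d)\supset B(0,d)\supset B$ for $d\ge\max(1,\diam(e))$; your $r$-neighborhood formulation is equivalent.

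However, your middle paragraph contains a genuine misstep that is worth flagging even though you ultimately route around it. The thread that invokes invariance (1) to rewrite $\bar\alpha(x_0\times\gamma y_0)$ as $\bar\alpha(\gamma^{-1}x_0\times y_0)$ and then appeals to the $1$-Lipschitz bound and properness of the $\Gamma$-action on $E\Gamma$ cannot produce the required finiteness: a $1$-Lipschitz estimate is an \emph{upper} bound, so knowing that $d_{E\Gamma}(\gamma^{-1}x_0,x_0)$ is large tells you nothing about whether $\bar\alpha(\gamma^{-1}x_0,y_0)$ has left a neighborhood of $B$ (a constant map on the slice is $1$-Lipschitz). The finiteness must come from properness along the $P$-direction (condition (2)), not from Lipschitz control along $E\Gamma$, and in fact condition (1) plays no role in this proposition. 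The ``wait'' you raised --- that $\gamma^{-1}x_0$ moves so (2) does not apply --- is a false alarm created by the unnecessary appeal to (1): you can, and should, apply (2) directly at the fixed basepoint $x_0$ with the set $\gamma c$ varying over $P$, which is precisely what both the paper and your own final paragraph do.
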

\begin{proof}
We fix $y_0\in P$. Suppose that $diam(e)\le d$, for $d>1$.
For $x\in e$ in view of the propernes of map $\bar\alpha|_{x\times P}:x\times P\to\R^n$ there is a compact neighborhood $D$ of $y_0$ such that 
$\bar\alpha(x\times(P\setminus D))\subset\R^n\setminus B(0,2d)$. Since by the condition (3), $\bar\alpha(e\times y)$ has diameter $\le d$ and
we obtain that $\bar\alpha(e\times(P\setminus D))\subset\R^n\setminus B(0,d)$. The properness of the action of $\Gamma$ on $P$ implies that only 
finitely many translates of $c$ lie in $D$.
\end{proof}

\begin{ex}
The space of probability measures on $\Gamma$ with finite supports $P_f(\Gamma)$ can be identified with $\Delta^\infty(\Gamma)$.  We consider the  
Wasserstein metric $W_1(\mu,\nu)$ on 
 $P_f(\Gamma)$ which   due to the Kantorovich-Rubinstein duality can be defined by the formula
$$
W_1(\mu,\nu)=\sup_{f\in Lip^1(\Gamma,\R)}|\int f d\mu-\int f d\nu|
$$
where $Lip^1(X,\R)$ denote the set of 1-Lipschitz real-valued functions on $X$. We note that the inclusion $\Gamma\subset P_f(\Gamma)$ is an isometric embedding. Then any 1-Lipschitz map $g:\Gamma\to Z$ defines a 1-Lipschitz map $P(g):P_f(\Gamma)\to P_f(Z)$. Since the barycenter map $b_\R:P_f(\R)\to\R$ defined as $b(\mu)=\int t d\mu$ is 1-Lipschitz, the composition 
$b_{\R^n}\circ P(\alpha):P_f(\Gamma)\to\R^n$ with the barycenter map  $b_{\R^n}:P_f(\R^n)\to\R^n$ is $\sqrt{n}$-Lipschitz.

The map $\bar\alpha:\Delta^\infty(\Gamma)\times P\to\R^n$ defined as  $$\bar\alpha(x,y)=\frac{1}{\sqrt{n}}b_{\R^n}\circ P(\alpha|_{\Gamma\times y})(x,y)$$
satisfies the conditions (1)-(3).
\end{ex}

\begin{prop}\label{select}
Suppose that a map $\alpha:\Gamma\times P\to\R^n$ satisfies the conditions (1)-(3) for a group $\Gamma$ with a finite CW complex $B\Gamma$ and a finite-dimensional simplicial complex $P$. Then there is a map
$\bar\alpha :E\Gamma\times P\to\R^n$, where $E\Gamma$ is the universal cover of $B\Gamma$,  satisfying the conditions 
\begin{itemize}
\item (1) $\alpha$ is invariant with respect to the diagonal action of $\Gamma$ on
$E\Gamma\times P$;
\item (2) the restriction $\alpha\mid_{y\times P}$ is proper for all $y\in E\Gamma$;
\item (3) the restriction $\alpha\mid_{E\Gamma\times p}$ is 1-Lipschitz for all $p\in P$ for the metric on $E\Gamma$ lifted from $B\Gamma$.
\end{itemize}
\end{prop}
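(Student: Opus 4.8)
The plan is to build $\bar\alpha$ by affine interpolation of the values of $\alpha$ over the simplices of $E\Gamma$. First I would arrange, harmlessly, that $B\Gamma$ is a finite simplicial complex with a piecewise Euclidean metric in which every simplex has one of finitely many isometry types (a map $\bar\alpha$ produced for such a model transports back to the originally given $B\Gamma$ along a $\Gamma$-equivariant Lipschitz homotopy equivalence of universal covers, followed by a rescaling, and this affects neither (1)--(3) nor the resulting proper Lipschitz classes). Then $E\Gamma$ is a locally finite, piecewise Euclidean simplicial complex on which $\Gamma$ acts freely, simplicially, cocompactly and isometrically for the lifted length metric $d_{E\Gamma}$. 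Fix representatives $v_1,\dots,v_m$ of the $\Gamma$-orbits of vertices of $E\Gamma$ and let $\phi\colon (E\Gamma)^{(0)}\to\Gamma$ be the $\Gamma$-equivariant map with $\phi(\gamma v_j)=\gamma$. Cocompactness yields two finite constants: $D:=\sup\{\,d_\Gamma(\phi(v),\phi(w)):v,w\ \text{span a simplex of}\ E\Gamma\,\}$, finite because $\Gamma$ acts with finitely many orbits on such pairs and $\phi(v)^{-1}\phi(w)$ is constant on each orbit by left invariance of the word metric $d_\Gamma$; and $G:=\sup_\sigma\kappa_\sigma$, where for a simplex $\sigma=\langle v_0,\dots,v_k\rangle$ the number $\kappa_\sigma$ is the least constant with $\|L\|\le\kappa_\sigma\max_i|L(v_i-v_0)|$ for every linear map $L$ on the affine span of $\sigma$, which is finite since $\kappa_\sigma$ depends only on the isometry type of $\sigma$. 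Put $t:=1/(GD)$, and $t:=1$ if $D=0$.

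Next I would set $\bar\alpha(v,p):=t\,\alpha(\phi(v),p)$ on vertices and extend affinely in the first variable over each simplex, $\bar\alpha\bigl(\sum_i t_i v_i,\,p\bigr):=\sum_i t_i\,\bar\alpha(v_i,p)$. This is consistent on common faces, it is continuous (piecewise affine in the first variable, a finite linear combination of the continuous maps $\alpha(\gamma,-)$ in the second, with joint continuity following from local finiteness of $E\Gamma$), and it is $\Gamma$-equivariant: since $\phi(\delta v_i)=\delta\phi(v_i)$ and $\alpha(\delta\gamma,\delta p)=\alpha(\gamma,p)$, one gets $\bar\alpha(\delta x,\delta p)=\bar\alpha(x,p)$, which is condition (1). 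Under the identification $\Gamma\cong\Gamma v_1$ one has $\bar\alpha|_{\Gamma v_1\times P}=t\,\alpha$, so $\bar\alpha$ extends a positive multiple of $\alpha$.

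It remains to verify (2) and (3). For (3): on a closed simplex $\sigma=\langle v_0,\dots,v_k\rangle$ the map $\bar\alpha(-,p)$ is affine with vertex values $t\,\alpha(\phi(v_i),p)$, which by condition (3) for $\alpha$ and the definition of $D$ lie pairwise at $\R^n$-distance $\le tD$; hence $\bar\alpha(-,p)|_\sigma$ is $(G\,t\,D)$-Lipschitz for the intrinsic Euclidean metric of $\sigma$, and $G\,t\,D=1$. A map that is $1$-Lipschitz on each closed simplex is $1$-Lipschitz for the length metric $d_{E\Gamma}$ (cut a path into simplex pieces and add up), which is (3). For (2): if $x$ lies in $\sigma$ with vertex $v_0$, then for every $p$ one has $|\bar\alpha(x,p)-\bar\alpha(v_0,p)|=t\,\bigl|\sum_i t_i\bigl(\alpha(\phi(v_i),p)-\alpha(\phi(v_0),p)\bigr)\bigr|\le tD$, so $p\mapsto\bar\alpha(x,p)$ stays at bounded $\R^n$-distance from $p\mapsto\bar\alpha(v_0,p)=t\,\alpha(\phi(v_0),p)$, which is proper by condition (2) for $\alpha$; hence $\bar\alpha(x,-)$ is proper as well.

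The hard part --- and the thing the whole construction is engineered for --- is producing properness, equivariance, and the Lipschitz bound at the same time. Equivariance and a Lipschitz bound would also come out of a naive coordinatewise McShane (or Kirszbraun) extension of $\alpha$ off the orbit, but there the properness of the vertical slices is opaque, since a convex combination of proper maps into $\R^n$ need not be proper, and continuity in the $P$-variable is awkward. Affine interpolation over simplices avoids this precisely because finiteness of $B\Gamma$ forces the group elements $\phi(v_i)$ attached to the vertices of any single simplex to lie a bounded word-distance apart; consequently every interpolated value over that simplex sits inside one universal $\R^n$-ball around a single vertex value, and that one fact yields both (2) and (3) with the honest constant $t=1/(GD)$. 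This is the only point at which cocompactness of the $\Gamma$-action is genuinely used.
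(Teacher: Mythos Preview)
Your argument is correct and takes a genuinely different route from the paper. The paper proves the proposition by working skeleton by skeleton on the given CW structure of $E\Gamma$ and invoking Michael's selection theorem: for each $(i{+}1)$-cell $e$ and each $p\in P$, the set of $1$-Lipschitz real-valued extensions to $\bar e$ of the already-constructed restriction to $\partial e$ is a nonempty compact convex subset of $C(\bar e,\R)$, and the resulting set-valued map $P\to C(\bar e,\R)$ is lower semicontinuous, so Michael yields a continuous selection; coordinatewise extension and a final $\sqrt n$-rescaling give the result. Your proof instead passes to a piecewise Euclidean simplicial model of $B\Gamma$, anchors $\bar\alpha$ on the vertex orbit via an equivariant labeling $\phi:(E\Gamma)^{(0)}\to\Gamma$, and interpolates affinely over each simplex; cocompactness bounds both the word-distance $D$ between labels of adjacent vertices and the shape constants $\kappa_\sigma$, and a single rescaling by $t=1/(GD)$ delivers (1)--(3) simultaneously. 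What your approach buys is an explicit, elementary construction that avoids any selection theorem, makes equivariance transparent, and gives a concrete Lipschitz constant; what the paper's approach buys is that it works directly on the original CW model without the preliminary passage to a simplicial one (your transport step via a $\Gamma$-equivariant Lipschitz map between models is fine for any geodesic metric on $B\Gamma$---which the paper itself tacitly assumes---but it is an extra move).
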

\begin{proof}
For the proof  we use the Michael selection theorem
\begin{thm}[\cite{M}]
Let $X$ be a paracompact space and $Y$ a Banach space. Let $F:X\to Y$ be a lower semi-continuous set-valued map with nonempty closed convex values.
Then there exists a continuous selection of $F$.
\end{thm}
The map $\alpha=(\alpha_j)$ is defined by its coordinate maps $\alpha_j:\Gamma\times P\to\R$. We construct an extension $\bar\alpha_j:E\Gamma\times P\to\R$ for each $j$.
We may assume that the CW complex $B\Gamma$ has one vertex. Then the 0-skeleton of $E\Gamma$ can be identified with $\Gamma$.
We construct the map $\bar\alpha$ recursively defining it on $E\Gamma^{(i)}\times P$ for the $i$-skeleton of $E\Gamma$.
The base of induction is the map $\alpha_j$.

Suppose that $\bar\alpha_j:E\Gamma^{(i)}\times P\to\R$ satisfying (1) and (3) is constructed. We extend it separately to $\bar e^{i+1}\times P$ for each $(i+1)$-dimensional cell
$e^{i+1}\subset E\Gamma$. Let $Lip^1(X,\R)$ denote the set of 1-Lipschitz functions. 
 Let $(X,A)$ be a compact pair  and let $f_0:A\to\R$ be a 1-Lipschitz function.

 Claim: Let $(X,A)$ be a compact pair  and let $f_0:A\to\R$ be a 1-Lipschitz map. 

(i) {\em Then the set $Lip_{A,f_0}^1(X,\R)$ of 1-Lipschitz functions $g:X\to\R$ that restrict to $f_0$ on $A$,
$g|_A=f_0$, is a nonempty compact  convex set in the Banach space $C(X,\R)$.}

(ii) {\em If a map $f: P\to Lip^1(A,\R)$ is continuous, then the multi-valued  map $F:P\to C(X,\R)$ defined as $F(p)=Lip^1_{A,f(p)}(X,\R)$
is a lower semi-continuous.}

{\em Proof of the Claim:} Since a 1-Lipschitz map to $\R$ can be extended to a 1-Lipschitz map for any pair $(X,A)$~\cite{CGM}, the set is nonempty.
Note that the straight-line homotopy in $\R$ between 1-Lipschitz maps as a path that consists of 1-Lipschitz maps. This proves convexity (i). Compactness follows 
from the Ascoli-Arzela theorem.

Let $p_k\to p$ be a sequence in $P$ converging to $p$. Let $g_k\in F(p_k)$, $g_k:X\to\R$.
Going to a subsequence in view of Ascoli-Arzela theorem we may assume that $g_k$ converges to a function $g:X\to\R$. Note that $g$ is 1-Lipschitz and  $g|_A=f(p)$.
Hence, $g\in F(p)$. This implies the semi-continuity of $F$.

We apply the claim (ii) with $X=\bar e^{i+1}$, $A=\partial e^{i+1}$, and $f: P\to Lip^1(A,\R)$ obtained from $\bar\alpha_j:A\times P\to\R$ to obtain a lower semi-continuous multi-valued function
 $F:P\to C(X,\R)$ defined as $F(p)=Lip^1_{A,f(p)}(X,\R)$. By the Michael selection theorem there is a continuous selection $\phi:P\to C(X,\R)$. The map $\phi$ defines the desired extension
$\bar\alpha$ to $\bar e^{i+1}\times P$.

We do this for all $(i+1)$-dimensional cells to get $\bar\alpha_j$ defined on $E\Gamma^{(i+1)}\times P$. Since we assume that the metric on $E\Gamma^{(i+1)}$ is geodesic, the local 1-Lipschitz condition extends to global. Thus, the condition (3) is satisfied.

As the result we obtain an extension $\bar\alpha_j:E\Gamma\times P\to\R$ of $\alpha_j$ that satisfies (1) and (3). Define $\bar\alpha=(\bar\alpha_j)_{j=1}^n: E\Gamma\times P\to\R^n$.
We rescale $\R^n$ to have the condition (3).

We need to check the condition (2), i.e. to show that $\bar\alpha$ is proper on $y\times P$ for each $y\in E\Gamma$. Suppose that  $|\bar\alpha(y\times p_n)|\le R$.
Then $$|(\alpha(\gamma\times p_n)|\le|\bar\alpha(y\times p_n)|  +|(\alpha(\gamma\times p_n)- \bar\alpha(y\times p_n)|\le R+ d(y,\gamma)$$ for fixed $\gamma$.
Since $\alpha|_{\gamma\times P}$ is proper, this implies that the sequence $p_n$ is bounded in $P$.
\end{proof}

Since the map $\bar\alpha$ is invariant under the diagonal action of $\Gamma$, there is a map  $\hat\alpha:E\Gamma\times_\Gamma P\to \R^n$ such that
$\bar\alpha=\hat\alpha\bar p$ where $\bar p:E\Gamma\times P\to E\Gamma\times_\Gamma P$ is the projection to the orbit space.
Then $\bar\alpha^*(\omega)=\bar p^*\hat\alpha^*(\omega)$. In the case of finite $B\Gamma$ the map $\hat\alpha$ is proper. Therefore,
$$[\bar\alpha^*(\omega)]=\bar p^*(\hat\alpha^*([\omega]))$$ where $\hat\alpha^*([\omega])\in H_c^n(E\Gamma\times_\Gamma P)$.
Thus, the cohomology class $[\bar\alpha^*(\omega)]$ is a $\Gamma$-invariant version of $\hat\alpha^*([\omega])$.

\subsection{Manifolds with $K\le 0$} Let $M$ be a closed  parallelizable Rimannian $n$-manifold with nonpositive sectional curvature and let $\Gamma=\pi_1(M)$. Then $M$ is $B\Gamma$.
Let $q:TM\to \R^n$ be a trivialization map of the tangent bundle which is an isomorphism on each fiber. By the Cartan-Hadamard theorem the exponential map
$exp_x:T_xM\to M$ lifts to  a diffeomorphism $\widetilde{exp}_x=exp_y:T_xM\to \Wi M$, where $p:\Wi M\to M$ is the universal covering amd $y\in  p^{-1}(x)$.
The collection of maps $\widetilde{exp}_x$, $x\in M$ defines a fiberwise diffeomorphism $$\widetilde{exp}:TM\to \Wi M\times_\Gamma \Wi M.$$
We note that the inverse map $\widetilde{exp}_x^{-1}$ is 1-Lipschitz. Let $\Lambda:\Wi M\times_\Gamma \Wi M\to TM$
be the inverse for $\widetilde{exp}$ . 
Here the projection of $TM\to M$ corresponds to the projection $\Wi M\times_\Gamma\Wi M\to M$ onto the second factor.
We treat the first factor as $E\Gamma$.
Consider the map
$$\bar\alpha=q\circ\Lambda\circ\bar p:E\Gamma\times\Wi M\to\R^n.$$
The map $\hat\alpha=q\Lambda$ is proper, since $q$ is proper and $\Lambda$ is a homeomorphism. Since the restriction $\bar p|_{x\times\Wi M}:x\times\Wi M\to E\Gamma_\Gamma\Wi M$, it follows that the restriction $\bar\alpha|_{x\times\Wi M}$ is proper for all $x\in E\Gamma$. Since $\widetilde{exp_x}^{-1}$ is 1-Lipschitz, the restriction $\bar\alpha|_{E\Gamma\times y}$ is 1-Lipschitz for all $y$. Thus, the conditions (1)-(3) are satisfied. 

We note that
$q^*([\omega])$ is the Thom class $u$ of the tangent bundle $TM\to M$ which is supported on a tubular neighborhood of the zero section of $TM$. Then  
$\hat\alpha^*([\omega])=\Lambda^*(u)$ is the Thom class of the bundle $E\Gamma\times_\Gamma\Wi M\to M$ which is supported on a tubular neighborhood $V$ of the "zero" section i.e. the image
$\bar p(\Delta\Wi M)$ of the diagonal. We may assume that the neighborhood $V$ is pulled back from the tubular neighborhood of the diagonal $\Delta M\subset M\times M$
under the natural projection $p':\Wi M\times_\Gamma\Wi M\to M\times M$. Thus, $\hat\alpha^*([\omega])$ is the image under $(p')^*$
of the Thom class $u'$ of normal bundle of the diagonal in $M\times M$.

We recall that the Thom class $u'$ of the normal bundle of the diagonal $\Delta M\subset M\times M$ generates the diagonal class $u''\in H^n(M\times M)$ and
the slant product with $u''$ is the Poincare Duality isomorphism $H_k(M)\to H^{n-k}(M)$. The proof of this fact can be derived from the fomula for $u''$ given in~\cite{MS}. 
This isomorphism can be stated in terms of  $\Gamma$-equivariant homology and cohomology of $\Wi M$  as $$(\bar p p')^*(u'')/ :H^\Gamma_k(\Wi M)\to H_{n-k}^\Gamma(\Wi M).$$
Thus, the slant product $(\bar p p')^*(u')/ :H^\Gamma_k(\Wi M)\to H_{n-k}^\Gamma(\Wi M)$ is an isomorphism. Since $(p')^*(u')=\hat\alpha^*([\omega])$,
the slant product $\alpha_\cap$ is an isomorphism.

Therefore, all cohomology of $\Gamma$ are proper Lipschitz~\cite{CGM}.

If $M$ is not parallelizable the same result can be achieved by adding to $TM$ the complementary vector bundle and make the above argument with a large parameter space $P$.
Then one should use the Poincare Duality for the infinite chain homology (sometimes called the Borel-Moore homology).

\subsection{The case of aspherical manifolds}

Now we consider the case when $B\Gamma=M$ is a closed orientable aspherical $n$-manifold and
$E\Gamma=\Wi M$ is its universal cover, $\Gamma=\pi_1(M)$. Suppose that a map $\alpha:\Gamma\times P\to\R^n$ satisfies the conditions (1)-(3). 
Then by Proposition~\ref{select} there is a map $\bar\alpha:\Wi M\times P\to\R^n$ satisfying conditions (1)-(3).
First we consider the case when $P=\Wi M$. If the map $\hat\alpha: \Wi M\times_\Gamma P\to\R^n$  induces an injective homomorphism of the $n$-dimensional cohomology with compact supports, then $\hat\alpha^*([\omega])$ is the Thom class of the normal bundle of $M=P/\Gamma\subset \Wi M\times_\Gamma P$  and the argument in the example in 2.3
implies that $\alpha_\cap$ is  an isomorphism. It means that all  cohomology classes of $\Gamma$ are proper Lipschitz.

One can expand the case when $\alpha_\cap$ is an isomorphism for aspherical manifolds as follows.
Suppose  that $\Gamma$ acts on $P=\Wi M\times\R^k$ in such a way that
the projection $pr:P\to \Wi M$ is equivariant. 
Then for rational coefficients the argument with the Thom class allows to prove the following

\begin{prop}[\cite{Dr}]\label{P}
Let $\alpha:\Wi M\times P\to\R^{n+k}$ be a map
satisfying the conditions (1)-(3). Then  $\alpha_{\cap}$ is isomorphism over $\Q$  in all dimensions if and only if
$\alpha_\cap$ is not zero.
\enditemize
\end{prop}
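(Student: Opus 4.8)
The "only if" direction is immediate: if $\alpha_{\cap}$ is an isomorphism in every degree, then in degree $j=n+k$ it carries $H_{n+k}(P:\Gamma)$ onto $H^{0}(\Gamma;\Q)=\Q\neq 0$, so $\alpha_{\cap}\neq 0$. For the converse I would argue as follows. Since $M=B\Gamma$ is compact, the map $\hat\alpha\colon\Wi M\times_{\Gamma}P\to\R^{n+k}$ induced by $\alpha$ on the orbit space is proper, and by the discussion following \propref{select} one has $[\alpha^{*}(\omega)]=\bar p^{*}(\theta)$ with $\theta=\hat\alpha^{*}([\omega])\in H^{n+k}_c(\Wi M\times_{\Gamma}P;\Q)$. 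Hence, under the standard identifications, $\alpha_{\cap}$ is the $\Gamma$-equivariant slant product with $\theta$; in particular $\alpha_{\cap}\neq 0$ forces $\theta\neq 0$, and it suffices to prove that $\theta\neq 0$ makes $\alpha_{\cap}$ an isomorphism over $\Q$ in all degrees.

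First I would compute $H^{n+k}_c(\Wi M\times_{\Gamma}P;\Q)$. Because $pr\colon P\to\Wi M$ is an equivariant $\R^{k}$-bundle and $\Gamma$ acts freely, $\Wi M\times_{\Gamma}P$ is the total space of a fiber bundle over $M$ with fiber $P=\Wi M\times\R^{k}$; equivalently it is an $\R^{k}$-bundle over the $2n$-manifold $\Wi M\times_{\Gamma}\Wi M$. Since $\Wi M$ is a contractible oriented $n$-manifold, Poincar\'e--Lefschetz duality and the K\"unneth formula give $H^{*}_c(\Wi M\times\R^{k};\Q)=H^{*}_c(\Wi M;\Q)\otimes H^{*}_c(\R^{k};\Q)=\Q$, concentrated in degree $n+k$, with monodromy the orientation local system $\mathcal L$ of the bundle (the $\Wi M$-direction contributes no monodromy, as $M$ is orientable). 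The Leray spectral sequence for compactly supported cohomology of $\Wi M\times_{\Gamma}P\to M$ then collapses and gives $H^{j}_c(\Wi M\times_{\Gamma}P;\Q)\cong H^{j-n-k}(M;\mathcal L\otimes\Q)$; for $j=n+k$ this is $H^{0}(M;\mathcal L\otimes\Q)$, which is $\Q$ when $\mathcal L$ is rationally trivial and $0$ otherwise. As $\theta\neq 0$, we conclude that $\mathcal L$ is rationally trivial, that $\Wi M\times_{\Gamma}P$ is a rationally oriented open $(2n+k)$-manifold, and that $\theta$ generates $H^{n+k}_c(\Wi M\times_{\Gamma}P;\Q)\cong\Q$.

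Next I would identify the slant product with this generator with Poincar\'e duality, following the pattern of the example in 2.3. The $M$-section of $\Wi M\times_{\Gamma}P$ --- the composite of the zero section of $pr$ with the diagonal section $M\hookrightarrow\Wi M\times_{\Gamma}\Wi M$ --- is a closed submanifold with a tubular neighborhood $V$ whose normal bundle is $\nu\cong TM\oplus\xi$, where $\xi=P/\Gamma\to M$ is the rank-$k$ bundle; thus $\operatorname{rk}\nu=n+k$ and, rationally, $\nu$ is oriented and its Thom class is defined. The inclusion $V\hookrightarrow\Wi M\times_{\Gamma}P$ is a homotopy equivalence (both sides are homotopy equivalent to $M$ via the bundle projection, which restricts on $V$ to the retraction onto the section), so the extension-by-zero map $H^{n+k}_c(V;\Q)\to H^{n+k}_c(\Wi M\times_{\Gamma}P;\Q)$ is an isomorphism of one-dimensional spaces; hence the image of the Thom class of $\nu$ is a generator, and $\theta$ is a nonzero scalar multiple of it. Finally, the slant product with the Thom class of $\nu$ factors, up to a nonzero scalar, as the Thom isomorphism $H_{j}(P:\Gamma)=H^{lf}_{j}(P/\Gamma)\xrightarrow{\cong}H_{j-k}(M;\Q)$ along $\xi$, followed by the slant product with the diagonal class of $\Wi M\times_{\Gamma}\Wi M$, which by the computation recalled in 2.3 is the Poincar\'e duality isomorphism $H_{j-k}(M;\Q)\xrightarrow{\cong}H^{n+k-j}(M;\Q)$. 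Therefore $\theta/(-)$, and with it $\alpha_{\cap}$, is an isomorphism in every degree.

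The step I expect to be the main obstacle is this last identification: making the equality of $\theta$ with the Thom class of $\nu$ precise at the (co)chain level in the $\Gamma$-equivariant, locally finite setting, and verifying that the slant product with the diagonal class of $\Wi M\times_{\Gamma}\Wi M$ really is Poincar\'e duality with the correct local coefficients. This is exactly where orientation issues for $\xi$ (equivalently, the possible nontriviality of $\mathcal L$) and for $M\times M$ intervene, and is the reason the statement is restricted to $\Q$.
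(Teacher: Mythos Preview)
The paper does not supply a proof of this proposition; it is stated with a citation to~\cite{Dr}, preceded only by the remark that ``the argument with the Thom class allows to prove the following'' and followed by the observation that the orbit space $\Wi M\times_\Gamma P$ is the total space of $(\Wi M\times_\Gamma\Wi M)\oplus E$, so that the Thom class can be used. Your proposal is exactly a detailed fleshing-out of that hinted Thom class argument along the lines of the computation in Section~2.3, so it is fully in line with what the paper indicates.
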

If the conditions of Proposition~\ref{P} are satisfied for an aspherical manifold $M$ we call the cohomology groups $H^*(M;\Q)=H^*(\pi_1(M),\Q)$ {\em canonically proper Lipschitz}.
In particular, Proposition~\ref{P} allows to extend the argument of 2.3 to not necessarily parallelizable manifolds with sectional curvature $K\le 0$. We refer to~\cite{CGM} for detailed account. Briefly, if $E$ is a $\R^k$-bundle "orthogonal" to the topological tangent bundle
$\Wi M\times_\Gamma\Wi M\cong TM$ of $M$, then  the Whitney sum admits a trivialization $q:TM\oplus E\to\R^{n+k}$. The pull-back $E^*\to \Wi M$ is a trivial
vector bundle and it is $\Gamma$-equivariant, hence, $P=\Wi M\times\R^k$ satisfies the condition of Proposition~\ref{P}. Note that the orbit space of the diagonal action on $\Wi M\times P$
is the total space of the bundle $(\Wi M\times_\Gamma\Wi M)\oplus E$ and hence  we can use the Thom class to show essentiality of $\hat\alpha$.

One of the corollaries of Proposition~\ref{P} states that if an aspherical manifold $M$ has a proper Lipschitz class in some dimension (say dimension 0) with the above $P$ then all its cohomology classes are proper Lipschitz.

The following theorem about general aspherical manifolds~\cite{Dr} is a generalization of  the case of manifolds with $K(M)\le 0$.
\begin{thm}
Suppose that the universal cover of an aspherical manifold $M$ with $\pi_1(M)=\Gamma$ admits a $\Gamma$-equivariant \v Cech acyclic Higson dominated compactification, then $M$
has canonically proper Lipschitz cohomology and, hence, the Novikov conjecture holds true for $\Gamma$. 
\end{thm}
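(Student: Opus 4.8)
The plan is to exhibit the hypothesis as an instance of Proposition~\ref{P}. Write $\bar X=\widetilde M\cup\nu X$ for the given $\Gamma$-equivariant \v Cech-acyclic Higson-dominated compactification of $\widetilde M=E\Gamma$, with corona $\nu X$. Non-parallelizability is handled exactly as for manifolds with $K\le0$ in Subsection~2.3: after adding to $TM$ a complementary bundle $E$ pulled back from $M$ one gets a trivialization $q:TM\oplus E\to\R^{n+k}$ with $E^*=\widetilde M\times\R^k$ trivial and equivariant, so that $P:=\widetilde M\times\R^k$ satisfies the standing hypotheses of Proposition~\ref{P}; for notational simplicity I argue below as if $TM$ were already trivial, i.e.\ $k=0$ and $P=\widetilde M$, the general case differing only by carrying the factor $\R^k$ and using Poincar\'e duality for infinite chains. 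By Proposition~\ref{select} it then suffices to produce one $\Gamma$-invariant map $\bar\alpha:\widetilde M\times\widetilde M\to\R^n$ satisfying conditions~(1)--(3) whose induced map $\hat\alpha:\widetilde M\times_\Gamma\widetilde M\to\R^n$ on the orbit space is proper and nonzero on $H^n_c$: indeed $\widetilde M\times_\Gamma\widetilde M$ is the total space of the tangent bundle $TM\to M$ and $H^n_c(\widetilde M\times_\Gamma\widetilde M)\cong\Z$ is generated by the zero-section Thom class, so $\hat\alpha^*[\omega]$ is then a nonzero multiple of it, whence $\alpha_\cap\ne0$ by the Thom-class computation of Subsection~2.3, and Proposition~\ref{P} makes $\alpha_\cap$ an isomorphism over $\Q$ in all degrees — i.e.\ $M$ has canonically proper Lipschitz cohomology, and the Novikov conjecture for $\Gamma$ follows by \cite{CGM}. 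The useful reformulation of the remaining task is that each fibre $F$ of $TM\to M$ is a copy of $\widetilde M$ and is closed, so the restriction $H^n_c(\widetilde M\times_\Gamma\widetilde M)\to H^n_c(F)$ is defined and carries $\hat\alpha^*[\omega]$ to $(\hat\alpha|_F)^*[\omega]$; since $\widetilde M$ is a contractible $n$-manifold, $H^n_c(F)\cong H^n_c(\R^n)\cong\Z$, so it is enough to arrange that the restriction of $\hat\alpha$ to one fibre is a proper map $\widetilde M\to\R^n$ of nonzero degree.

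The construction of $\bar\alpha$ is where the compactification is used, the two hypotheses playing complementary roles. First form the compact space $W$, the fibrewise compactification of the $\widetilde M$-bundle $TM=\widetilde M\times_\Gamma\widetilde M\to M$ obtained from the $\Gamma$-compactification $\bar X$ of the fibre: thus $W\to M=B\Gamma$ has fibre $\bar X$, contains $TM$ as a dense open set, and has complement $\partial W$ the bundle over $M$ with fibre $\nu X$. Because $H^*_c(\widetilde M)\cong H^*_c(\R^n)$, the exact sequence of the pair $(\bar X,\nu X)$ together with \v Cech-acyclicity of $\nu X$ shows that $\bar X$ has the \v Cech cohomology of $S^n$ and $\check H^n(\bar X,\nu X)\cong H^n_c(\widetilde M)\cong\Z$. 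One then wants a fibrewise map $\Psi:W\to D^n$ over $M$ sending $TM$ into the open disk and $\partial W$ into $S^{n-1}$, restricting on each fibre to a map of pairs $(\bar X,\nu X)\to(D^n,S^{n-1})$ of nonzero degree — equivalently, a $\Gamma$-invariant continuous map $\bar X\times\widetilde M\to D^n$ (the first factor the compactified $E\Gamma$, the second $P$) with these properties. Here \v Cech-acyclicity enters: because $\check H^n(\bar X,\nu X)\cong\Z$ and the lower \v Cech cohomology of $\nu X$ vanishes, a Hopf-type realization theorem (in the \v Cech/ANR framework underlying \cite{Dr}) provides degree-one maps of pairs, and they can be assembled fibrewise over the finite complex $M$ by an obstruction-theoretic section argument. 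Higson-domination then delivers condition~(3): every continuous function on $\bar X$ restricts to a slowly oscillating function on $\widetilde M$ for the lifted metric, so the coordinates of $\bar\alpha|_{\widetilde M\times y}$ have $R$-variation tending to $0$ at infinity; by equivariance the supremum of this variation over the first variable depends only on the $\Gamma$-orbit of $y$ and so is bounded over the compact $M$, and chaining along geodesics turns this into a uniform Lipschitz bound, normalized to $1$ by a rescaling of $\R^n$. Invariance~(1) is built in; properness~(2) of $\bar\alpha|_{x\times\widetilde M}$ holds because, by equivariance, $\bar\alpha(x,\gamma y)$ is the value of $\Psi$ at $\gamma^{-1}x$ in the appropriate fibre, and $\gamma^{-1}x$ leaves every compact of $\widetilde M$ as $\gamma y\to\infty$ (the action being proper and cocompact), so this value tends to $\Psi(\nu X)\subset S^{n-1}$, i.e.\ leaves every compact of the open disk. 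Finally $\hat\alpha=\Psi|_{TM}$ is proper — preimages of compacta lie in $TM$ and are closed in the compact $W$ — and nonzero on $H^n_c$ because it has nonzero degree on fibres.

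The main obstacle is exactly the assembly step: producing the $\Gamma$-equivariant fibrewise map $\Psi$ with nonzero fibre-degree, i.e.\ a section over $M$ of the associated bundle whose fibre is the space of such maps of pairs $(\bar X,\nu X)\to(D^n,S^{n-1})$. That space is not weakly contractible — its higher homotopy involves $\pi_*(S^n)$ — so the equivariant choice faces genuine obstructions, and reconciling it with properness in the $P$-direction and with the Lipschitz estimate while retaining nonzero fibre-degree is the technical heart of \cite{Dr}; this is where the full strength of Higson-domination and of the metrizability, finite-dimensionality and ANR-type properties implicit in the notion of compactification are needed, in the spirit of the controlled-topology arguments of Carlson--Pedersen \cite{CP}. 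By contrast, once such a $\Psi$ is in hand everything else — properness, the passage from ``nonzero on $H^n_c$'' to Poincar\'e duality via the tangent Thom class of Subsection~2.3, and the conclusion about canonically proper Lipschitz cohomology and the Novikov conjecture via Proposition~\ref{P} — is routine bookkeeping.
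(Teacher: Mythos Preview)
The paper does not prove this theorem; it is quoted from \cite{Dr} without argument (beyond the remark that Higson-domination coincides with Bestvina's nullity condition), so there is no in-paper proof to compare against.

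Your outline is nonetheless the right strategy and is consistent with how \cite{Dr} proceeds: reduce to Proposition~\ref{P} by producing, from the compactification, an equivariant map whose orbit-space version $\hat\alpha$ is proper and essential on $H^n_c$, with \v Cech-acyclicity of the corona supplying nonzero fibre-degree and Higson-domination supplying the Lipschitz control. Two comments. First, your Lipschitz step slightly overreaches: continuity on the compactum $\bar X$ yields only boundedness and vanishing $R$-variation at infinity, and chaining along geodesics gives a \emph{coarse} bound $|f(x)-f(x')|\le C\bigl(d(x,x')/R+1\bigr)$, not a genuine Lipschitz constant at small scales on $\widetilde M$. This is harmless, since the definition only asks for Lipschitz on the orbit $\Gamma\subset\widetilde M$ with the word metric, where distinct points sit at distance $\ge 1$; restrict to $\Gamma\times P$ and then invoke Proposition~\ref{select} rather than claiming Lipschitz on all of $\widetilde M$ directly. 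Second --- as you yourself acknowledge --- the substantive step is the equivariant construction of the fibrewise map $\Psi:(W,\partial W)\to(D^n,S^{n-1})$ of nonzero degree, and you defer this to \cite{Dr}. So your write-up is a reduction sketch rather than a proof, which is exactly the status the present paper accords the result as well.
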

We recall that the Higson domination condition for a compactification of $E\Gamma$ is equivalent to the nulity condition from the definition of Bestvina's $Z$-boundary~\cite{Best}.

In particular this theorem implies the Novikov conjecture for the fundamental group of an aspherical manifold $\Gamma=\pi_1(M)$  when $\Gamma$ admits an $EZ$-boundary. We refer to
~\cite{FL} for the original proof of the Novicov conjecture for groups admitting $EZ$-boundary.

\section{Berstein-Schwarz class of aspherical manifolds}

\subsection{Berstein-Schwarz class} 
Let $\Gamma$ be a discrete group, $\Z\Gamma$ denote the group ring, and let $\epsilon:\Z\Gamma\to\Z$ be the augmentation homomorphism. Then the
augmentation ideal $I(\Gamma)$ is the kernel of $\epsilon$.
The Berstein-Schwarz class $\beta_\Gamma\in H^1(\Gamma,I(\Gamma))$ is the image $\beta_\Gamma=\delta(1)$ 
of the generator $1\in H^0(\Gamma,\Z)$ under connecting homomorphism $\delta:H^0(\Gamma,\Z)\to H^1(\Gamma,I(\Gamma))$ in the coefficient long exact sequence
generated by the short exact sequence
$$
0\to I(\Gamma) \to\Z\Gamma\stackrel{\epsilon}\to\Z\to 0.
$$
We recall~\cite{Br} that the cap product $\alpha_1\alpha_2$ of two cohomology classes $\alpha_i\in H^{k_i}(\Gamma,L_i)$, $i=1,2$, with coefficients in $\Z\Gamma$-modules $L_i$ belongs to $H^{k_1+k_2}(\Gamma,L_1\otimes L_2)$ where the tensor product is taken over $\Z$.

\begin{thm}[Universality]~\cite{DR},\cite{Sc}
For any $\Z\Gamma$-module $L$ and any cohomology class $\alpha\in H^k(\Gamma,L)$ there is a homomorphism $\phi:I(\Gamma)^{\otimes k}\to L$
such that $\alpha$ is the image of $(\beta_\Gamma)^k$ under the induced homomorphism
$$
\phi^*:H^k(\Gamma, I(\Gamma)^{\otimes k})\to H^k(\Gamma,L).
$$
\end{thm}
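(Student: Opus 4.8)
The plan is to exploit the classifying property of the Berstein-Schwarz class, which says that $\beta_\Gamma$ is universal among \emph{one}-dimensional classes with coefficients in arbitrary $\Z\Gamma$-modules, and then to bootstrap from dimension one to dimension $k$ by taking cup powers. First I would recall the precise form of this classifying property established in~\cite{DR},\cite{Sc}: for any $\Z\Gamma$-module $N$ and any class $a\in H^1(\Gamma,N)$ there is a $\Z\Gamma$-homomorphism $\psi\colon I(\Gamma)\to N$ with $\psi_*(\beta_\Gamma)=a$. This is the $k=1$ case of the theorem, and it is the engine for everything else. Equivalently, $H^1(\Gamma,-)$ is corepresented, at the level of the class $\beta_\Gamma$, by $I(\Gamma)$; this follows because $H^1(\Gamma,N)=\Ext^1_{\Z\Gamma}(\Z,N)$ classifies extensions of $\Z$ by $N$, and $\beta_\Gamma$ corresponds to the canonical extension $0\to I(\Gamma)\to\Z\Gamma\to\Z\to 0$, so any extension is a pushout of the canonical one along a unique map $I(\Gamma)\to N$.

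The main step is then to pass from $k=1$ to general $k$. Given $\alpha\in H^k(\Gamma,L)$, I want to realize $\alpha$ as the image of $(\beta_\Gamma)^k\in H^k(\Gamma,I(\Gamma)^{\otimes k})$. The strategy is induction on $k$. The cup product (over $\Z$) gives a pairing
\[
H^1(\Gamma,I(\Gamma))\otimes H^{k-1}(\Gamma,I(\Gamma)^{\otimes(k-1)})\to H^k(\Gamma,I(\Gamma)^{\otimes k}),
\]
and $(\beta_\Gamma)^k$ is by definition $\beta_\Gamma\smile(\beta_\Gamma)^{k-1}$. The key observation is that the functor $H^k(\Gamma,-)$ applied along the map "cup with $\beta_\Gamma$" is again classified by $I(\Gamma)^{\otimes k}$: more precisely, I expect that for any module $L$ the assignment $a\mapsto \beta_\Gamma\smile a$ from $H^{k-1}(\Gamma,M)$ to $H^k(\Gamma,I(\Gamma)\otimes M)$ together with the universal property in degree $k-1$ shows that $(\beta_\Gamma)^k$ is universal in degree $k$. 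Concretely: by the inductive hypothesis, any class in $H^{k-1}(\Gamma,M)$ is $\phi'_*((\beta_\Gamma)^{k-1})$ for some $\phi'\colon I(\Gamma)^{\otimes(k-1)}\to M$; applying this with a suitable auxiliary module and using naturality of the cup product with respect to coefficient homomorphisms in the second variable, one reduces the realizability of an arbitrary $\alpha\in H^k(\Gamma,L)$ to the degree-one statement. The cleanest way to organize this is via the Eilenberg--MacLane / bar resolution description of cup products, where $\beta_\Gamma$ is represented by the $1$-cocycle $\gamma\mapsto \gamma-1\in I(\Gamma)$, so that $(\beta_\Gamma)^k$ is represented by the cocycle $(\gamma_1,\dots,\gamma_k)\mapsto(\gamma_1-1)\otimes(\gamma_1\gamma_2-\gamma_1)\otimes\cdots$ and one sees directly that an arbitrary normalized $k$-cocycle with values in $L$ factors through this universal one via a module map $I(\Gamma)^{\otimes k}\to L$ sending the tautological cocycle value to the given one.

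I would then check that the resulting $\phi$ is well-defined as a $\Z\Gamma$-module homomorphism, i.e.\ that it respects the relations in $I(\Gamma)^{\otimes k}$; this is where the argument has real content, and it is essentially the statement that $H^k(\Gamma,I(\Gamma)^{\otimes k})$, or rather the subgroup generated by the universal cocycle under coefficient change, is free of rank one on $(\beta_\Gamma)^k$ in the appropriate sense. Equivalently one shows that the functor $L\mapsto H^k(\Gamma,L)$ restricted to the image of "coefficient homomorphisms out of $I(\Gamma)^{\otimes k}$" is corepresented by $I(\Gamma)^{\otimes k}$ at the class $(\beta_\Gamma)^k$, which follows formally once one knows $I(\Gamma)^{\otimes k}$ sits in an exact sequence built by splicing $k$ copies of $0\to I(\Gamma)\to\Z\Gamma\to\Z\to 0$ tensored with identities, and that $(\beta_\Gamma)^k$ is the class of this splice under the iterated connecting homomorphism. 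The main obstacle I anticipate is precisely bookkeeping this splicing and verifying that the iterated connecting map sends $1\in H^0(\Gamma,\Z)$ to $(\beta_\Gamma)^k$ — that is, matching the "cup power" definition of $(\beta_\Gamma)^k$ with the "iterated $\delta$" description — since the cup product and the connecting homomorphism must be reconciled; after that, the universality is a formal consequence of the long exact sequence in the coefficient variable. Since this theorem is quoted from~\cite{DR} and~\cite{Sc}, in the paper I would simply cite it, but the proof sketch above is the route I would reconstruct.
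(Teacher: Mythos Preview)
The paper does not prove this theorem: it is stated with citations to~\cite{DR} and~\cite{Sc} and used as a black box, exactly as you anticipate in your final sentence. There is therefore nothing to compare your argument against in the paper itself.

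As for the reconstruction you offer, it is essentially the standard proof. The cleanest way to phrase it is the one you arrive at near the end: tensoring the augmentation sequence successively with $I(\Gamma)^{\otimes(i-1)}$ gives short exact sequences
\[
0\to I(\Gamma)^{\otimes i}\to I(\Gamma)^{\otimes(i-1)}\otimes\Z\Gamma\to I(\Gamma)^{\otimes(i-1)}\to 0
\]
with projective middle term (the paper itself uses this fact in the proof of the next theorem), so $I(\Gamma)^{\otimes k}$ is a $k$-th syzygy of $\Z$. Dimension shifting then yields a surjection $\Hom_\Gamma(I(\Gamma)^{\otimes k},L)\to\Ext^k_{\Z\Gamma}(\Z,L)=H^k(\Gamma,L)$ sending $\phi$ to $\phi_*$ of the class of the spliced extension; that class is $(\beta_\Gamma)^k$ because the connecting homomorphism for each of these short exact sequences is cup product with $\beta_\Gamma$. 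Your worry about reconciling the cup-power and iterated-$\delta$ descriptions is legitimate but routine: it is the standard compatibility of Yoneda splicing with cup product, and in~\cite{DR} it is handled exactly by the explicit bar-cocycle computation you sketch. The earlier inductive paragraph in your proposal (reducing degree $k$ to degree $k-1$ via an ``auxiliary module'') is vaguer than it needs to be and does not quite stand on its own, but since you then supply the correct syzygy/connecting-map argument, the overall sketch is sound.
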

In the case when $\Gamma$ is the fundamental group of a closed orientable aspherical manifold the group $H^k(\Gamma, I(\Gamma)^{\otimes k})$ can be computed.
\begin{thm}
For a closed orientable aspherical $n$-manifold $M$ with the fundamental group $\Gamma$ for $k<n$ the cohomology group
$H^k(M;I(\Gamma)^{\otimes k})$ is an infinite cyclic group  generated by $(\beta_\Gamma)^k$.

The cohomology group $H^n(M;I(\Gamma)^{\otimes n})$ is the group of coinvariants $(I(\Gamma)^{\otimes n})_\Gamma=I(\Gamma)^{\otimes n}\otimes_\Gamma\Z$.
\end{thm}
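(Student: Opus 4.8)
The plan is to reduce everything to Poincaré duality for the group $\Gamma=\pi_1(M)$ together with the short exact sequence $0\to I(\Gamma)\to\Z\Gamma\to\Z\to 0$ defining $\beta_\Gamma$, raised to suitable tensor powers. Since $M$ is a closed orientable aspherical $n$-manifold, $\Gamma$ is an orientable Poincaré duality group of dimension $n$, so for every $\Z\Gamma$-module $A$ there is a natural isomorphism $H^k(M;A)=H^k(\Gamma;A)\cong H_{n-k}(\Gamma;A)$; in particular $H^j(\Gamma;\Z\Gamma)=0$ for $j\neq n$. The top-dimensional statement is then immediate: $H^n(M;I(\Gamma)^{\otimes n})\cong H_0(\Gamma;I(\Gamma)^{\otimes n})$, and the zeroth homology of a module is its group of coinvariants, so this equals $(I(\Gamma)^{\otimes n})_\Gamma=I(\Gamma)^{\otimes n}\otimes_\Gamma\Z$.

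For $1\le k<n$ I would tensor the augmentation sequence over $\Z$ with the free abelian group $I(\Gamma)^{\otimes(k-1)}$ (a subgroup of the free abelian group $(\Z\Gamma)^{\otimes(k-1)}$, hence flat over $\Z$), obtaining the exact sequence
$$0\to I(\Gamma)^{\otimes k}\to\Z\Gamma\otimes I(\Gamma)^{\otimes(k-1)}\to I(\Gamma)^{\otimes(k-1)}\to 0 .$$
Equipped with the diagonal action, the middle module is isomorphic to a free $\Z\Gamma$-module via $g\otimes m\mapsto g\otimes g^{-1}m$. By Poincaré duality and the fact that group homology commutes with arbitrary direct sums, the cohomology of this free module vanishes in every degree different from $n$. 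Plugging this into the long exact cohomology sequence of the displayed sequence shows that the connecting homomorphism $\delta\colon H^{k-1}(\Gamma;I(\Gamma)^{\otimes(k-1)})\to H^k(\Gamma;I(\Gamma)^{\otimes k})$ is an isomorphism for every $k$ with $1\le k\le n-1$. Iterating down to degree $0$ yields $H^k(M;I(\Gamma)^{\otimes k})\cong H^0(\Gamma;\Z)=\Z$.

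It remains to pin down the generator. The connecting homomorphism of the sequence obtained by tensoring with $I(\Gamma)^{\otimes(k-1)}$ coincides, up to sign, with cup product by the connecting homomorphism $\delta(1)=\beta_\Gamma$ of the original augmentation sequence — this is the usual compatibility of connecting homomorphisms with the cup product pairing $H^{1}(\Gamma;I(\Gamma))\otimes H^{k-1}(\Gamma;I(\Gamma)^{\otimes(k-1)})\to H^{k}(\Gamma;I(\Gamma)^{\otimes k})$. Hence, inductively, the isomorphism of the previous paragraph carries the generator $(\beta_\Gamma)^{k-1}$ to $\pm(\beta_\Gamma)^{k}$, so $H^k(M;I(\Gamma)^{\otimes k})$ is infinite cyclic on $(\beta_\Gamma)^k$. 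The step needing the most care is exactly this last one: verifying the cup-product description of the connecting homomorphism with the correct signs, together with the (easy but worth stating) observation that the vanishing of the cohomology of the middle module can only be read off after dualizing to homology, since $H^*(\Gamma;-)$ does not commute with the infinite direct sum presenting that free module.
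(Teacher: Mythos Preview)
Your proof is correct and uses the same engine as the paper's: tensor the augmentation sequence with $I(\Gamma)^{\otimes(k-1)}$, observe that the middle term is projective (indeed free, as you note), and iterate the resulting connecting isomorphisms. The only organizational difference is that the paper runs the iteration on the homology side---obtaining $H_n(M;\Z)\cong H_{n-1}(M;I(\Gamma))\cong\cdots\cong H_{n-k}(M;I(\Gamma)^{\otimes k})$---and then applies Poincar\'e duality once at the end, which sidesteps the direct-sum issue you (rightly) flag for cohomology; you instead invoke duality inside the vanishing step and stay in cohomology throughout. One bonus on your side: the paper's written proof never actually checks that $(\beta_\Gamma)^k$ is the generator, whereas your identification of the connecting map with cup product by $\beta_\Gamma$ supplies precisely that.
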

\begin{proof}
For any $\ell$ the short exact sequences
$$
0\to I(\Gamma)\to\Z\Gamma\to\Z\to 0
$$
defines by tensor product with $I(\Gamma)^{\otimes \ell-1}$  a short exact sequence
$$
  0\to I(\Gamma)^{\otimes \ell}\to I(\Gamma)^{\otimes \ell-1}\otimes\Z\Gamma\to I(\Gamma)^{\otimes \ell-1}\to 0.
$$
We note that the $\Gamma$-module $I(\Gamma)^{\otimes \ell-1}\otimes \Z\Gamma$ is projective~\cite{DR}.
Then the coefficient long exact sequence for homology and the fact that homology of a group with coefficients in projective module
are trivial in dimensions $>0$~\cite{Br} produce the isomorphisms for $i>0$
$$
H_{i+1}(M;I(\Gamma)^{\otimes \ell-1})\to H_i(M;I(\Gamma)^{\otimes\ell}).
$$
In particular,
$$
\Z=H_n(M;\Z)=H_{n-1}(M;I(\Gamma))=H_{n-2}(M;I(\Gamma)^{\otimes 2})=\cdots=H_1(M;I(\Gamma)^{\otimes n-1}).
$$
By  the Poincare duality with local coefficients~\cite{Br} $$H^k(M;I(\Gamma)^{\otimes k})=H_{n-k}(M;I(\Gamma)^{\otimes k})=\Z$$
for $k<n$ and $$H^n(M;I(\Gamma)^{\otimes n})=H_0(\Gamma,I(\Gamma)^{\otimes n})=(I(\Gamma)^{\otimes n})_\Gamma.$$
\end{proof}

\begin{thm}\label{short exact}
For any group $\pi$  there is an isomorphism $H_1(\pi,\Z)=I(\pi)_\pi$. For $\ell>1$ there is a short exact sequence
$$
0\to H_\ell(\pi;\Z)\to (I(\pi)^{\otimes \ell})_\pi\to I(\pi)I(\pi)^{\otimes\ell-1}\to 0.
$$
\end{thm}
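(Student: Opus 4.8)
The plan is to bootstrap from the computation already carried out in the previous theorem, where the sequence of short exact sequences
$$
0\to I(\pi)^{\otimes \ell}\to I(\pi)^{\otimes \ell-1}\otimes\Z\pi\to I(\pi)^{\otimes \ell-1}\to 0
$$
and the projectivity of $I(\pi)^{\otimes \ell-1}\otimes\Z\pi$ were used. For $\ell=1$ the sequence is just $0\to I(\pi)\to\Z\pi\to\Z\to 0$; taking $\pi$-coinvariants (i.e. applying $-\otimes_\pi\Z$, equivalently computing $H_0$) gives the exact sequence $H_1(\pi,\Z\pi)\to H_1(\pi,\Z)\to I(\pi)_\pi\to (\Z\pi)_\pi\to \Z_\pi$. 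Here $(\Z\pi)_\pi=\Z$ and the map $(\Z\pi)_\pi\to\Z_\pi=\Z$ induced by the augmentation is an isomorphism, and $H_1(\pi,\Z\pi)=0$ since $\Z\pi$ is free; hence $H_1(\pi,\Z)\to I(\pi)_\pi$ is an isomorphism. This is the first assertion.

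For $\ell>1$ the strategy is to write down the portion of the homology long exact sequence for the short exact sequence displayed above in degrees $1$ and $0$:
$$
H_1(\pi;I(\pi)^{\otimes\ell-1}\otimes\Z\pi)\to H_1(\pi;I(\pi)^{\otimes\ell-1})\to H_0(\pi;I(\pi)^{\otimes\ell})\to H_0(\pi;I(\pi)^{\otimes\ell-1}\otimes\Z\pi)\to H_0(\pi;I(\pi)^{\otimes\ell-1}).
$$
The leftmost term vanishes because $I(\pi)^{\otimes\ell-1}\otimes\Z\pi$ is projective, so we get a short exact sequence
$$
0\to H_1(\pi;I(\pi)^{\otimes\ell-1})\to (I(\pi)^{\otimes\ell})_\pi\to \Ker\big((I(\pi)^{\otimes\ell-1}\otimes\Z\pi)_\pi\to (I(\pi)^{\otimes\ell-1})_\pi\big)\to 0.
$$
Now I would identify the two outer terms. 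For the left term, the dimension-shifting isomorphisms from the previous theorem (valid for any group $\pi$, not just manifold groups, since only projectivity was used) give $H_1(\pi;I(\pi)^{\otimes\ell-1})\cong H_2(\pi;I(\pi)^{\otimes\ell-2})\cong\cdots\cong H_\ell(\pi;\Z)$. For the right term, I need to recognize $\Ker\big((I(\pi)^{\otimes\ell-1}\otimes\Z\pi)_\pi\to (I(\pi)^{\otimes\ell-1})_\pi\big)$ as $I(\pi)I(\pi)^{\otimes\ell-1}$ — that is, the image of $I(\pi)\otimes I(\pi)^{\otimes\ell-1}\to I(\pi)^{\otimes\ell-1}\otimes\Z\pi$, or equivalently the submodule of $I(\pi)^{\otimes\ell}$ generated by products. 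The point is that the original map $I(\pi)^{\otimes\ell}\to I(\pi)^{\otimes\ell-1}\otimes\Z\pi$ lands in that tensor factor, and after passing to coinvariants one must compute the kernel of the induced map $(I(\pi)^{\otimes\ell-1}\otimes\Z\pi)_\pi\to(I(\pi)^{\otimes\ell-1})_\pi$. Using the standard identification $(N\otimes\Z\pi)_\pi\cong N$ (as abelian groups, via $n\otimes\gamma\mapsto \gamma^{-1}n$, which trivializes the diagonal action), this map becomes $N\to N_\pi$, the canonical quotient, whose kernel is the subgroup generated by $\{n-\gamma n\}$, i.e. $I(\pi)\cdot N$ with $N=I(\pi)^{\otimes\ell-1}$. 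This produces exactly $I(\pi)I(\pi)^{\otimes\ell-1}$ as claimed.

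I expect the main obstacle to be the bookkeeping in the last step: pinning down precisely how the connecting/quotient maps transport through the isomorphism $(N\otimes\Z\pi)_\pi\cong N$, and verifying that the image of $(I(\pi)^{\otimes\ell})_\pi$ in $(I(\pi)^{\otimes\ell-1}\otimes\Z\pi)_\pi$ really does match $I(\pi)\cdot I(\pi)^{\otimes\ell-1}$ under that identification, as opposed to some twisted variant. Once the identification $(N\otimes\Z\pi)_\pi\cong N$ is set up carefully and one checks that the module map $I(\pi)^{\otimes\ell}\hookrightarrow N\otimes\Z\pi$ corresponds under it to the inclusion $I(\pi)\cdot N\hookrightarrow N$ coming from multiplication $I(\pi)\otimes N\to N$, the short exact sequence falls out of the long exact sequence above together with the dimension shift. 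Everything else — vanishing of $H_1$ with projective coefficients, the chain of dimension-shifting isomorphisms — is quoted from the previous theorem and from \cite{Br}.
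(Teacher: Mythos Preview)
Your proof is correct and follows essentially the same route as the paper: both use the long exact sequence in homology associated to $0\to I(\pi)^{\otimes\ell}\to I(\pi)^{\otimes\ell-1}\otimes\Z\pi\to I(\pi)^{\otimes\ell-1}\to 0$, the vanishing of $H_1$ with projective coefficients, the dimension-shifting isomorphisms $H_1(\pi,I(\pi)^{\otimes\ell-1})\cong H_\ell(\pi,\Z)$, and the identification of the kernel term via $(N\otimes\Z\pi)_\pi\cong N$ (the paper phrases this as $(M\otimes N)_\pi=M\otimes_\pi N$, which is the same observation). Your bookkeeping concern about the kernel identification is exactly what the paper handles in one line, and your more explicit description of the isomorphism $n\otimes\gamma\mapsto\gamma^{-1}n$ is the correct way to justify it.
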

\begin{proof}
The equality  $H_1(\pi,\Z)=I(\pi)_\pi$ follows from the coefficients long exact sequence
$$
0=H_1(\pi,\Z\pi)\to H_1(\pi,\Z)\to H_0(\pi, I(\pi))\to H_0(\pi,\Z\pi)\to H_0(\pi,\Z)\to 0
$$
and the fact that $H_0(\pi,\Z\pi)=(\Z\pi)_\pi\to \Z=H_0(\pi,\Z)$ is an isomorphism. For $\ell>1$ we have the following exact sequence
$$
0\to H_1(\pi,I(\pi)^{\otimes\ell-1})\to H_0(\pi, I(\pi)^{\otimes\ell})\to H_0(\pi,I(\pi)^{\otimes\ell-1}\otimes\Z\pi)\to H_0(\pi,I(\pi)^{\otimes\ell-1})\to 0
$$
which defines the short exact sequence
$$
0\to H_1(\pi,I(\pi)^{\otimes\ell-1})\to H_0(\pi, I(\pi)^{\otimes\ell})\to K_{\ell-1}\to 0$$
where
$$K_\ell=\Ker\{(I(\pi)^{\otimes\ell}\otimes\Z\pi)_\pi\stackrel{1\otimes\epsilon}\longrightarrow (I(\pi)^{\otimes\ell}\otimes\Z)_\pi\}.$$
In view of the equality $(M\otimes N)_\pi=M\otimes_\pi N$ we obtain $$K_{\ell}=Ker\{I(\pi)^{\otimes\ell}\to(I(\pi)^{\otimes\ell})_\pi\}=I(\pi)I(\pi)^{\otimes\ell}.$$
The chain of isomorphisms
$$
H_{i+1}(\pi,I(\pi)^{\otimes k-1})\to H_i(\pi;I(\pi)^{\otimes k})
$$
for $i\ge 1$ defines the equalities
$$
H_1(\pi,I(\pi)^{\otimes\ell-1})=H_2(\pi,I(\pi)^{\otimes\ell-2})=\cdots=H_{l-1}(\pi,I(\pi))=H_\ell(\pi,\Z).$$
\end{proof}
\begin{prop}
For a closed orientable aspherical manifold $M$ with the fundamental group $\Gamma$,
$$
 H^n(M;I(\Gamma)^{\otimes n})=I(\Gamma)I(\Gamma)^{\otimes n-1}\oplus\Z.
$$
\end{prop}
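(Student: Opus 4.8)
The plan is to read the statement off the two computations already carried out in Section~3. By the theorem computing $H^k(M;I(\Gamma)^{\otimes k})$, together with Poincar\'e duality with local coefficients (which identifies $H^n(M;I(\Gamma)^{\otimes n})$ with $H_0(\Gamma,I(\Gamma)^{\otimes n})$), we have $H^n(M;I(\Gamma)^{\otimes n})=(I(\Gamma)^{\otimes n})_\Gamma$. So it suffices to analyze the group of coinvariants $(I(\Gamma)^{\otimes n})_\Gamma$, and for that I would invoke Theorem~\ref{short exact} with $\pi=\Gamma$ and $\ell=n$, which yields the short exact sequence
$$0\to H_n(\Gamma;\Z)\to (I(\Gamma)^{\otimes n})_\Gamma\to I(\Gamma)I(\Gamma)^{\otimes n-1}\to 0.$$

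Next I would identify the left-hand term. Since $M$ is a closed orientable aspherical $n$-manifold, $M=B\Gamma$ and $H_n(\Gamma;\Z)=H_n(M;\Z)=\Z$, generated by the fundamental class; so the sequence becomes
$$0\to\Z\to (I(\Gamma)^{\otimes n})_\Gamma\to I(\Gamma)I(\Gamma)^{\otimes n-1}\to 0.$$
It then remains to split this short exact sequence of abelian groups, after which $(I(\Gamma)^{\otimes n})_\Gamma\cong I(\Gamma)I(\Gamma)^{\otimes n-1}\oplus\Z$, which is the asserted decomposition.

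To split it, I would use the description of the quotient coming from the proof of Theorem~\ref{short exact}: the group $I(\Gamma)I(\Gamma)^{\otimes n-1}$ is by construction the kernel $\Ker\{I(\Gamma)^{\otimes n}\to (I(\Gamma)^{\otimes n})_\Gamma\}$, hence a subgroup of $I(\Gamma)^{\otimes n}$. Now $\Z\Gamma$ is free abelian on $\Gamma$, so $I(\Gamma)=\Ker\epsilon$ is free abelian (on $\{g-1:g\neq 1\}$), and therefore the $n$-fold tensor power $I(\Gamma)^{\otimes n}$ over $\Z$ is again free abelian. A subgroup of a free abelian group is free abelian, so $I(\Gamma)I(\Gamma)^{\otimes n-1}$ is free, in particular projective as a $\Z$-module, and the short exact sequence splits.

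The only subtle point — and the step I would flag as the main obstacle — is precisely this splitting: one cannot expect to split the sequence as $\Z\Gamma$-modules, so one must observe that $I(\Gamma)I(\Gamma)^{\otimes n-1}$ embeds in the \emph{free} abelian group $I(\Gamma)^{\otimes n}$ in order to conclude that it is free over $\Z$ and hence $\Z$-projective. Everything else is a direct substitution into the results established earlier in the section.
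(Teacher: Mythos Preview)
Your argument is correct and genuinely different from the paper's. Both proofs start from the same short exact sequence
$0\to\Z\to (I(\Gamma)^{\otimes n})_\Gamma\to I(\Gamma)I(\Gamma)^{\otimes n-1}\to 0$
coming from Theorem~\ref{short exact}, but they split it by opposite ends. You split on the right: since $I(\Gamma)$ is free abelian on $\{g-1:g\neq 1\}$, the tensor power $I(\Gamma)^{\otimes n}$ is free abelian, and $I(\Gamma)I(\Gamma)^{\otimes n-1}$, being a subgroup, is free abelian and hence $\Z$-projective. The paper instead splits on the left: it identifies the image of $\Z$ with the cyclic subgroup generated by (the Poincar\'e dual of) $(\beta_\Gamma)^n$, and then invokes the Universality Theorem to produce a coefficient homomorphism $\phi:I(\Gamma)^{\otimes n}\to\Z$ for which $\phi^*:H^n(M;I(\Gamma)^{\otimes n})\to H^n(M;\Z)=\Z$ sends $(\beta_\Gamma)^n$ to $1$, giving an explicit retraction of $i$. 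Your route is more elementary and self-contained; the paper's route carries extra information, namely that the $\Z$-summand is generated by $(\beta_\Gamma)^n$ and the retraction is induced by a coefficient map, which ties into the surrounding discussion of Lipschitz cohomology. One minor remark: your caveat about not splitting ``as $\Z\Gamma$-modules'' is moot, since all three terms are coinvariant groups with trivial $\Gamma$-action; the only structure in play is that of abelian groups.
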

\begin{proof} By Theorem~\ref{short exact} there is the short exact sequence
$$0\to\Z\stackrel{i}\to H^n(M;I(\Gamma)^{\otimes n})\stackrel{j}\to I(\Gamma)I(\Gamma)^{\otimes n-1}\to 0.$$
As it follows from the commutative diagram
$$
\begin{CD}
H_n(M;\Z)@>\cap\beta_\Gamma>> H_{n-1}(M.I(\Gamma))@>\cap\beta_\Gamma >>\dots @>\cap\beta_\Gamma>> H_0(M;I(\Gamma)^{\otimes n})\\
@A[M]\cap AA @A[M]\cap AA @. @A[M]\cap AA\\
H^0(M;\Z) @>\cup\beta_\Gamma>> H^1(M;I(\Gamma))@>\cup\beta_\Gamma >>\dots @>\cup\beta_\Gamma>> H^n(M;I(\Gamma)^{\otimes n})\\
\end{CD}
$$
the homomorphism $i$ being the Poincare dual to the iterated cup product with $\beta_\Gamma$ is an iterated $\cap$-product with $\beta_\Gamma$. Hence $i:H_n(M;\Z)\to H_0(M;I(\Gamma)^{\otimes n})$
takes the fundamental class $[M]$ to $[M]\cap(\beta_\Gamma)^n$, the Poincare dual to $(\beta_\Gamma)^n$.
 By the Universality of the Berstein-Schwarz class
there is a homomorphism of $\Gamma$-modules $\phi:I(\Gamma)^{\otimes n}\to \Z$ such that
the induced homomorphism
 $$\phi^*:H^n(M;I(\Gamma)^{\otimes n})\to H^n(M,\Z)=\Z$$ takes $(\beta_\Gamma)^n$
to the generator $1\in\Z$.  Let $H^n(M;\Z)=\Z\stackrel{\psi}\to \Z=H_n(M;\Z)$ be the isomorphism that takes 1 to $[M]$. We obtain the commutative diagram 
$$
\begin{CD}
H^n(M;\Z) @<\phi^*<< H^n(M;I(\Gamma)^{\otimes n})\\
@V\psi VV @V[M]\cap VV\\
H_n(M;\Z) @> -\cap(\beta_\Gamma)^n>> H_0(M;I(\Gamma)^{\otimes n})\\
\end{CD}
$$
that defines the splitting.
\end{proof}

\section{Lipschitz cohomology with local coefficients}

\subsection{Proper Lipschitz cohomology with local coefficients} 
Let $\Gamma$ be a discrete group with finite classifying complex $B\Gamma$.
We recall that he proper Lipschitz cohomology classes of $\Gamma$ were by
the following data~\cite{CGM}:
Suppose that there is a finite-dimensional connected complex $P$ with a  proper $\Gamma$-action
and a  map $\alpha:
\Gamma\times P\to\R^n$ satisfying the conditions
\begin{itemize}
\item (1) $\alpha$ is invariant with respect to the diagonal action of $\Gamma$ on
$\Gamma\times P$;
\item (2) the restriction $\alpha\mid_{\gamma\times P}$ is proper for all $\gamma\in \Gamma$;
\item (3) the restriction $\alpha\mid_{\Gamma\times p}$ is 1-Lipschitz for all $p\in P$ for the word metric on $\Gamma$.
\end{itemize}
Connes-Gromov-Moscovici used constant real coefficients $\R$. Here we present the definition which extends to local coefficients $L$ with base ring $\Z$ as well as $\R$.

We assume that the action of $\Gamma$ on $P$ is a free simplicial action.
Let $D=C_*(P)$ be the simplicial chain complex of $P$ and let $S_m$ denote the set of all $m$-simplices in $P$.
The group of $\Gamma$-invariant
$m$-chains $D_m^\Gamma(L)=(D_m\otimes L)^\Gamma$ with coefficients in $L$ consists of 
formal sums $\sum_{\sigma\in S_m}\lambda_\sigma\sigma$ 
where coefficients $\lambda_\sigma\in L$ are satisfying the equalities $\lambda_{\gamma\sigma}=\gamma\lambda_\sigma$. 
\begin{prop}\label{tensor}
If $D$ is a finitely generated $\Z\Gamma$-module, then
$D^\Gamma(L)\cong L\otimes_\Gamma D$.
\end{prop}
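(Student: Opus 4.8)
The plan is to construct, for each degree $m$, a natural homomorphism
$\Phi\colon L\otimes_\Gamma D_m\to D_m^\Gamma(L)$, verify that it commutes with the differentials, and then show it is bijective using the hypothesis that $D$ is finitely generated over $\Z\Gamma$. Since the $\Gamma$-action on $P$ is free and simplicial, the set $S_m$ of $m$-simplices is a free $\Gamma$-set, so $D_m=C_m(P)$ is a \emph{free} $\Z\Gamma$-module on any choice of orbit representatives; finite generation of $D$ over $\Z\Gamma$ then means precisely that in each degree there are only finitely many orbits of simplices (and $D_m=0$ for $m$ large, $P$ being finite-dimensional).

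I would define $\Phi$ on generators by $\Phi(l\otimes\sigma)=\sum_{\gamma\in\Gamma}(\gamma l)(\gamma\sigma)$ for $l\in L$ and $\sigma$ an $m$-simplex. First, this infinite formal sum is a bona fide element of $D_m^\Gamma(L)$: because the action on $S_m$ is free, the simplices $\{\gamma\sigma\}_{\gamma\in\Gamma}$ are pairwise distinct, so the formula simply assigns the label $\gamma l$ to $\gamma\sigma$ and $0$ to simplices outside the orbit of $\sigma$, and the resulting labeling $\lambda$ is $\Gamma$-equivariant since $\lambda_{h(\gamma\sigma)}=\lambda_{(h\gamma)\sigma}=(h\gamma)l=h\lambda_{\gamma\sigma}$. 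To see that $\Phi$ descends to the tensor product over $\Z\Gamma$ — where $L$ becomes a right module via $l\cdot g:=g^{-1}l$, so that $l\otimes g\sigma=g^{-1}l\otimes\sigma$ — I would reindex the defining sum by $\gamma\mapsto\gamma g$:
$$
\Phi(g^{-1}l\otimes\sigma)=\sum_{\gamma}(\gamma g^{-1}l)(\gamma\sigma)=\sum_{\gamma}(\gamma l)(\gamma g\sigma)=\Phi(l\otimes g\sigma).
$$
Additivity of $\Phi$ and its naturality in $L$ are immediate, and commutation with the differential is checked on generators since the differential of $D$ is $\Z\Gamma$-linear. (With finitely many orbits and finite dimension, $P$ is automatically locally finite, so these differentials of infinite equivariant chains are well defined.)

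Finally I would prove bijectivity. Fix orbit representatives $\sigma_1,\dots,\sigma_k$ of the $m$-simplices; by finite generation $k<\infty$, and $L\otimes_\Gamma D_m\cong L^{\oplus k}$ via $(l_1,\dots,l_k)\mapsto\sum_i l_i\otimes\sigma_i$. For injectivity, $\Phi(\sum_i l_i\otimes\sigma_i)$ carries the label $\gamma l_i$ on $\gamma\sigma_i$, so if it vanishes then every $l_i=0$. For surjectivity, given an equivariant labeling $\lambda=(\lambda_\sigma)_{\sigma\in S_m}\in D_m^\Gamma(L)$, set $l_i:=\lambda_{\sigma_i}$; every $m$-simplex is uniquely of the form $\gamma\sigma_i$, and $\lambda_{\gamma\sigma_i}=\gamma\lambda_{\sigma_i}=\gamma l_i$, so $\Phi(\sum_i l_i\otimes\sigma_i)=\lambda$. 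Hence $\Phi$ is an isomorphism of chain complexes, which gives the claim.

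There is no deep obstacle here; the care is entirely in two places. One is keeping the left/right $\Z\Gamma$-module conventions straight so that the reindexing above genuinely shows $\Phi$ is well defined on $L\otimes_{\Z\Gamma}D_m$. The other is recognizing that it is exactly finite generation that is needed: in general $\Phi$ is the natural inclusion $\bigoplus_{S_m/\Gamma}L\hookrightarrow\prod_{S_m/\Gamma}L$ of the finitely supported equivariant chains into all equivariant chains — always injective, but onto only when there are finitely many orbits.
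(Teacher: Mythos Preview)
Your proof is correct and follows essentially the same approach as the paper: both arguments use the free orbit decomposition of the simplices to write down the explicit map (your $\Phi$ is precisely the paper's $\Phi^{-1}$, so you have simply chosen the opposite direction as the primary map). Your write-up is more thorough—you spell out the right-module convention, verify compatibility with the differentials, and give the conceptual remark that without finite generation the map is the inclusion of a direct sum into a direct product—but the underlying idea is identical.
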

\begin{proof}
Since $D$ is finitely generated, every its element  $\sum_{\sigma\in S_m}\lambda_\sigma\sigma$ can be splitted in a finite sum $\sum_i\sum_{\gamma\in\Gamma}\lambda_{\gamma\sigma_i}(\gamma\sigma_i)$.
The map $\Phi:D^\Gamma(L)\to L\otimes_\Gamma D$ defined on generators as $$\Phi(\sum_{\gamma}\lambda_{\gamma\sigma}(\gamma\sigma))=\lambda_\sigma\otimes_\Gamma\sigma$$
is well-defined homomorphism, since $$\lambda_\sigma\otimes_\Gamma\sigma=\gamma\lambda_\sigma\otimes_\Gamma\gamma\sigma=\lambda_{\gamma\sigma}\otimes_\Gamma\gamma\sigma.$$  It
is an isomorphism with the inverse $\Phi^{-1}$ that takes  $\lambda\otimes_\Gamma\sigma$ to $\sum_{\gamma}\gamma\lambda(\gamma\sigma)$. 
\end{proof}

Let $C_*$ be the normal resolution of $\Gamma$, i.e. the simplicial chain complex of the simplex $\Delta^\infty(\Gamma)$ spanned by $\Gamma$~\cite{Br}.
Suppose that $\phi: (C\otimes D)_n\to\Z$ is a $\Gamma$-invariant cocycle such that the sum
$$
\sum_{\sigma}\phi(c\otimes \sigma)$$
 is finite for any $\Gamma$-invariant infinite chain $z=\sum_{\sigma}n_\sigma\sigma$  in $D^\Gamma=D^\Gamma(\Z)$ where $\sigma$ runs over all $(n-k)$-dimensional simplices 
in $P$. Thus, this sum defines  a $\Gamma$-invariant cocycle $\bar\phi:(C\otimes D^\Gamma)_n\to\Z$. 
We  define a $\Gamma$-invariant slant product $$\bar\phi/:D^\Gamma_{n-k}(L)\to Hom_\Gamma(C_k,L)$$ by the formula
$$
(\bar\phi/z)(\Delta)=\sum_{\sigma\in S_{n-k}}\phi(\Delta\otimes \sigma)\lambda_{\sigma}
$$
where $z=\sum_{\sigma\in S_{n-k}}\lambda_{\sigma}\sigma$ is a $\Gamma$-invariant $(n-k)$-chain with coefficients in $L$ and  $\Delta=[\gamma_0,\dots,\gamma_k]$ is a $k$-simplex in $\Delta^\infty(\Gamma)$.

Next, we show that the cochain $\bar\phi/z: C_k\to L$ 
is equivariant: 
 $$(\bar\phi/z)(g\Delta)=\sum_\sigma\phi(g\Delta\otimes\sigma)\lambda_{\sigma}=
\sum_\sigma\phi(\Delta\otimes g^{-1}\sigma)gg^{-1}\lambda_{\sigma}=$$
$$\sum_\sigma  \phi(\Delta\otimes g^{-1}\sigma)g\lambda_{g^{-1}\sigma}=
g(\bar\phi/z)(\Delta).$$
Here we use the fact that $g^{-1}:S_{n-k}\to S_{n-k}$ is a bijection.
The cochain $\bar\phi/z$ is a
cocycle when $z=\sum_{\gamma\in\Gamma}\lambda_{\gamma\sigma}\gamma\sigma$ is a $\Gamma$-invariant cycle.
Indeed,
it is a cocycle  since $\bar\phi$ is a cocycle and applying the Leibnitz formula $\partial(c\otimes a)=\partial c\otimes a \pm c\otimes\partial a$ 
we obtain $\delta(\bar\phi/z)=(\delta\bar\phi)/z\pm\phi/\partial z=0$. 

Then we show that  slant product with $\bar\phi$ defines the homomorphism $$H_{n-k}(D_*^\Gamma(L))\to H^k(C_*;L)$$ which we denote as $[\bar\phi]/$.
Let $z$ be a $\Gamma$-invariant cycle as above and let $z'=\sum_{\sigma}\mu_{\sigma}\sigma'$ be an invariant $(n-k+1)$-dimensional chain.
Then $\bar\phi/(z+\partial z')=\bar\phi/z+\bar\phi/\partial z'$. Note that $\bar\phi/\partial z'=\delta(\bar\phi/z')$. Therefore, $\bar\phi/(z+\partial z')$ defines the same cohomology class as
$\bar\phi/z$.

\

Assume that we have a map $\alpha:\Gamma\times P\to\R^n$ satisfying the conditions 1-3 and let $\bar\alpha:\Delta^\infty(\Gamma)\times\Gamma\to\R$ be the extension defined in Section 2.
Let $\omega$ be a cocycle on $\R^n$ whose cohomology  class $[\omega]$ generates the group $H^n_c(\R^n;\Z)=\Z$.
We define $\phi=\bar\alpha^*(\omega)$. The  homomorphism $\phi: C\otimes  D|_n\to\Z$  is defined by the formula
$$\phi(\Delta\otimes\sigma)=\omega(\bar\alpha|\Delta\times\sigma)$$
where we treat $\bar\alpha:\Delta^k\times\sigma^{n-k}\to\R^n$ as a singular chain with the staircase triangulation of the product of two oriented simplices.
The cocycle $\phi$ is $\Gamma$-invariant, since so is the map $\alpha$. The  argument  in subsection 2.2 shows that  the sum
$$
\sum_{\sigma}\phi(c\otimes \sigma)$$
 is finite.

We denote by  $H_k(P:\Gamma;L)$ the homology group defined by infinite invariant chains with coefficients in a $\Gamma$-module $L$. 
Note that if $P/\Gamma$ is compact $H_k(P:\Gamma;L)=H_k(P/\Gamma;L)$.
As in  the case of constant coefficients~\cite{CGM} we denote the corresponding slant product $[\bar\phi]/$ homomorphism  by
$$\alpha_{\cap}: H_k( P:\Gamma;L)\to H^{n-k}(\Gamma,L).$$

Thus, the cohomology class $\alpha_\cap([\sum_\sigma\lambda_\sigma\sigma])$  for a $\Gamma$-invariant cycle $\sum_\sigma\lambda_\sigma\sigma$ is defined by the cocycle 
$$[\gamma_0,\dots,\gamma_k]\mapsto\sum_\sigma\omega(\bar\alpha|[\gamma_0,\dots, \gamma_k]\times\sigma)\lambda_\sigma
.$$ 

\begin{defin} All classes in the image $\alpha_{\cap}(H_*( P/\Gamma;L))\subset
H^*(\Gamma,L)$ are called {\it proper Lipschitz cohomology classes} of a group $\Gamma$ with coefficients in the $\Z\Gamma$-module $L$.
\end{defin}

\begin{rem}
In the case of constant coefficients $L=\R$ the above formula recovers the  Connes-Gromov-Moscovici definition of Lipschitz cohomology as it presented in section 2. 
\end{rem}

\begin{rem}
In the above construction we can use cellular chain complexes $C_*$ and $D_*$ whenever $E\Gamma$ and $P$ are given $\Gamma$-equivariant CW-complex structures.
\end{rem}

\begin{thm}\label{coefficients}
The image of a proper Lipschitz class under coefficients homomorphism is proper Lipschitz.
\end{thm}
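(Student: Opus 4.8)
The plan is to show that a coefficient homomorphism of $\Z\Gamma$-modules $\psi\colon L\to L'$ commutes with the slant-product construction that defines $\alpha_\cap$, so that it carries the image of $\alpha_\cap$ in $H^*(\Gamma,L)$ into the image of (the same data) $\alpha_\cap$ in $H^*(\Gamma,L')$. The point is that the map $\alpha\colon\Gamma\times P\to\R^n$, its extension $\bar\alpha$, the cocycle $\omega$ generating $H^n_c(\R^n;\Z)$, and hence the $\Gamma$-invariant integral cocycle $\phi=\bar\alpha^*(\omega)$ on $(C\otimes D)_n$ do not involve the coefficient module at all; the module enters only through the chains $D^\Gamma_{n-k}(L)$ and the target $\Hom_\Gamma(C_k,L)$. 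So the whole construction is natural in $L$.

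Concretely, first I would record that a homomorphism $\psi\colon L\to L'$ of $\Z\Gamma$-modules induces $\psi_*\colon D^\Gamma_{n-k}(L)\to D^\Gamma_{n-k}(L')$ by $\sum_\sigma\lambda_\sigma\sigma\mapsto\sum_\sigma\psi(\lambda_\sigma)\sigma$ (this respects the equivariance condition $\lambda_{\gamma\sigma}=\gamma\lambda_\sigma$ because $\psi$ is $\Gamma$-equivariant), a chain map commuting with $\partial$, hence inducing $\psi_*\colon H_{n-k}(P:\Gamma;L)\to H_{n-k}(P:\Gamma;L')$; and similarly $\psi$ induces the coefficient homomorphism $\psi_*\colon H^*(\Gamma,L)\to H^*(\Gamma,L')$ on group cohomology at the cochain level by post-composition, $\Hom_\Gamma(C_k,L)\to\Hom_\Gamma(C_k,L')$. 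Second, I would check the single identity that makes everything work: for a $\Gamma$-invariant infinite cycle $z=\sum_\sigma\lambda_\sigma\sigma$ with coefficients in $L$ and a $k$-simplex $\Delta=[\gamma_0,\dots,\gamma_k]$,
$$
\bigl(\bar\phi/\psi_*(z)\bigr)(\Delta)=\sum_{\sigma\in S_{n-k}}\phi(\Delta\otimes\sigma)\,\psi(\lambda_\sigma)=\psi\Bigl(\sum_{\sigma\in S_{n-k}}\phi(\Delta\otimes\sigma)\,\lambda_\sigma\Bigr)=\psi_*\bigl((\bar\phi/z)(\Delta)\bigr),
$$
using that $\psi$ is additive and $\Z$-linear and that the sum is finite by the properness argument of subsection 2.2 (which is independent of the coefficients). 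Thus $\bar\phi/\psi_*(z)=\psi_*(\bar\phi/z)$ as cochains in $\Hom_\Gamma(C_k,L')$, and passing to cohomology, $\alpha_\cap\circ\psi_*=\psi_*\circ\alpha_\cap$ as maps $H_{n-k}(P:\Gamma;L)\to H^k(\Gamma,L')$.

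Finally, given a proper Lipschitz class $a\in H^k(\Gamma,L)$, write $a=\alpha_\cap(b)$ for some $b\in H_{n-k}(P:\Gamma;L)$ coming from data $(P,\alpha,\omega)$; then $\psi_*(a)=\psi_*(\alpha_\cap(b))=\alpha_\cap(\psi_*(b))$ is in the image of the same $\alpha_\cap$ with coefficients in $L'$, hence is proper Lipschitz. I do not expect a genuine obstacle here — the content is entirely the naturality bookkeeping — but the one place to be careful is the interchange of $\psi$ with the infinite sum $\sum_\sigma$: this is legitimate precisely because, for fixed $\Delta$, only finitely many $\sigma$ contribute (the compact set $D\subset P$ from Proposition/subsection 2.2 meets only finitely many simplices), so the "sum" is really finite and $\psi$ commutes with it trivially. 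I would also note in a sentence that the same argument applies verbatim if one uses the cellular chain complexes $C_*$, $D_*$ of equivariant CW structures on $E\Gamma$ and $P$, as in the preceding remark.
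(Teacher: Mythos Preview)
Your proposal is correct and follows essentially the same approach as the paper: the paper's proof is the single displayed computation $(\bar\phi/h_*(z))(\Delta^k)=\sum_\sigma\phi(\Delta^k\otimes\sigma)h(\lambda_\sigma)=h^*((\bar\phi/z)(\Delta^k))$, which is exactly your key identity. Your additional remarks on finiteness of the sum and naturality of the induced maps are reasonable elaborations but not needed beyond what the paper already established in setting up $\alpha_\cap$.
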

\begin{proof}
Let $h:L\to L'$ be a homomorphism of $\Z\Gamma$-modules and let $a=[\bar\phi]/[z]\in H^k(\Gamma,L)$ be a proper Lipschitz cohomology class realized by a map $\alpha:\Gamma\times P\to\R^n$. Then $h^*(a)=[\bar\phi]/h_*([b])$. Indeed, if $z=\sum_\sigma\lambda_\sigma\sigma$, then $h_*(z)=\sum_\sigma h(\lambda_\sigma)\sigma$ and
$$(\bar\phi/h_*(z))(\Delta^k)=
\sum_\sigma \phi(\Delta^k\otimes\sigma)h(\lambda_\sigma)=h^*(\bar\phi/b)(\Delta^k)).$$
\end{proof}

Given a short exact sequence of $\Z\Gamma$-modules $0\to K\to L\stackrel{\phi}\to M\to 0$ and a chain complex of free  $\Z\Gamma$-modules $D_*$
the sequence $
0\to D_*^\Gamma(K)\to D_*^\Gamma(L)\to  D_*^\Gamma(M)\to 0
$ is exact. For finitely generated $\Z\Gamma$-modules $D_*$ it follows from Proposition~\ref{tensor}. For infinitely generated $D_*$ we leave the proof to the reader.

\begin{thm}\label{slant naturality}
Let $0\to K\to L\stackrel{\phi}\to M\to 0$ be a short exact sequence of $\Z\Gamma$-modules, $C_*$ and $D_*$ be chain complexes of free  $\Z\Gamma$-modules,
and let $a^*\in H^n(C\otimes D^\Gamma;\Z)$. Then the diagram
$$
\begin{CD}
H_{n-k}(D^\Gamma(M)) @>{a^*/}>> H^k(C,M)\\
@V\bar\partial VV @V\bar\delta VV\\
H_{n-k-1}(D^\Gamma(K)) @>{a^*/}>> H^{k+1}(C,K)\\
\end{CD}
$$
commutes up to the sign,
where $\bar\partial$ and $\bar\delta$ are connecting homomorphisms in the coefficient homology and cohomology long exact sequences.
\end{thm}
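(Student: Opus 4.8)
The plan is to verify commutativity at the cochain level by choosing explicit cocycle representatives and tracking both connecting homomorphisms through the diagram. Fix a cocycle $a\in (C\otimes D^\Gamma(\Z))_n$ representing $a^*$ and having the finiteness property needed for the slant product to be defined (this is exactly the setup in which $a^*/$ is defined for all three coefficient modules $K$, $L$, $M$). Start with a homology class in $H_{n-k}(D^\Gamma(M))$ represented by a $\Gamma$-invariant cycle $z=\sum_\sigma \mu_\sigma\sigma$ with $\mu_\sigma\in M$. Because $D_*$ is a complex of free $\Z\Gamma$-modules, the sequence $0\to D^\Gamma(K)\to D^\Gamma(L)\to D^\Gamma(M)\to 0$ is exact (stated in the paragraph preceding the theorem), so one can lift $z$ to a chain $\tilde z=\sum_\sigma \ell_\sigma\sigma\in D^\Gamma_{n-k}(L)$ with $\phi(\ell_\sigma)=\mu_\sigma$; then $\partial\tilde z$ lies in $D^\Gamma_{n-k-1}(K)$ and represents $\bar\partial[z]$ by definition of the connecting homomorphism in homology.

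Next I would compute both ways around the square applied to $[z]$. Going down-then-right: $a^*/\bar\partial[z]$ is represented by the cocycle $\Delta\mapsto \sum_\sigma a(\Delta\otimes\sigma)\,(\partial\tilde z)_\sigma$, where $(\partial\tilde z)_\sigma\in K$ denotes the $K$-coefficient of $\partial\tilde z$ on $\sigma$. Going right-then-down: first form the cochain $u\colon \Delta\mapsto \sum_\sigma a(\Delta\otimes\sigma)\,\ell_\sigma\in \Hom_\Gamma(C_k,L)$, which lifts the cocycle $a^*/z$ (whose coefficients are $\mu_\sigma=\phi(\ell_\sigma)$); then $\bar\delta(a^*/z)$ is represented by $\delta u$, which lands in $\Hom_\Gamma(C_{k+1},K)$. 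Applying the Leibniz rule exactly as in the verification that $\bar\phi/z$ is a cocycle — namely $\delta(a/\tilde z)=(\delta a)/\tilde z\pm a/\partial\tilde z$ — and using that $a$ is a cocycle, we get $\delta u=\pm\, a/\partial\tilde z$, i.e. $\delta u$ is represented by $\Delta\mapsto \pm\sum_\sigma a(\Delta\otimes\sigma)(\partial\tilde z)_\sigma$. Comparing, the two cocycles agree up to the sign coming from the Leibniz rule, which is the claimed commutativity up to sign. One should also check the two representatives are independent of the choices (cycle representative of $[z]$, lift $\tilde z$): a different lift changes $\tilde z$ by an element of $D^\Gamma(K)$, which changes $u$ by the corresponding cocycle in $\Hom_\Gamma(C_k,K)$ and hence does not affect $\bar\delta(a^*/z)$; this is a routine diagram chase using naturality of $a^*/$ established in the proof of Theorem~\ref{coefficients}.

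The main obstacle, such as it is, is bookkeeping rather than conceptual: one must be careful that the slant-product sums $\sum_\sigma a(\Delta\otimes\sigma)\ell_\sigma$ remain finite for the lifted $L$-valued chain $\tilde z$ — but this is automatic because the finiteness depends only on $a$ and the support of the underlying simplicial chain $\sum_\sigma n_\sigma\sigma$ (with $n_\sigma\ne 0$ exactly when the coefficient is nonzero), which is unchanged under lifting. The other point requiring care is pinning down the sign in the Leibniz formula $\partial(c\otimes a)=\partial c\otimes a\pm(-1)^{|c|}c\otimes\partial a$ and the sign in $\delta$ acting on $\Hom$; since the statement only claims commutativity up to sign, tracking these precisely is unnecessary, and the argument above suffices.
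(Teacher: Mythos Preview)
Your proposal is correct and follows essentially the same approach as the paper's proof: both lift the $M$-valued cycle $z$ to an $L$-valued chain $\tilde z$ using the exactness of $0\to D^\Gamma(K)\to D^\Gamma(L)\to D^\Gamma(M)\to 0$, use $\partial\tilde z$ to represent $\bar\partial[z]$ and $a/\tilde z$ to lift $a/z$, and then compare the two routes via the Leibniz formula together with the cocycle condition on $a$. Your additional remarks on well-definedness and finiteness are harmless elaborations of the same argument.
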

\begin{proof}
Let $a$ denote a cocycle representing $a^*$ and let $z=\sum m_\sigma\sigma$ be a cycle in $D_*^\Gamma(M)$, $m_\sigma\in M$ and $m_{\gamma\sigma}=\gamma m_\sigma$.  Then 
by the Snake Lemma for homology $\bar\partial z=\sum\ell_\sigma\partial\sigma$ with $\phi(\ell_\sigma)=m_\sigma$, $\ell_\sigma\in L$ and $\ell_{\gamma\sigma}=\gamma\ell_\sigma$. Then

$$(a/\bar\partial z)(c)=\sum_\sigma a(c\otimes\partial\sigma)\ell_\sigma=\pm\sum_\sigma a(\partial c\otimes\sigma)\ell_\sigma$$
where the second equality is due to the Leibnitz formula and the fact that $a$ is a cocycle, $a(\partial(c\otimes\sigma))=0$.
For a cochain $\psi:C_k\to M$ we denote by $\bar\psi$ a lift $\bar\psi:C_k\to L$ with respect to $\phi:L\to M$. Since $(a/z)(c')=\sum_\sigma a(c'\otimes\sigma)m_\sigma$, we can define
the lift $\overline{a/z}$ by the formula $(\overline{a/z})(c')=\sum_\sigma  a(c'\otimes\sigma)\ell_\sigma$ with $\ell_\sigma$ chosen above.

By the Snake Lemma for cohomology we obtain
$$
\bar\delta(a/z)(c)=(\overline{a/z})(\partial c)=\sum_\sigma  a(\partial c\otimes\sigma)\ell_\sigma=(a/\bar\partial z)(c).
$$
\end{proof}

\begin{cor}\label{1}
Suppose that the integral 0-dimensional cohomology class  $1\in H^0(\Gamma)$ is proper Lipschitz.
Then the Berstein-Schwarz class $\beta_\Gamma\in H^1(\Gamma,I(\Gamma)$ is proper Lipschitz.
\end{cor}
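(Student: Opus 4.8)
The plan is to obtain this immediately from the naturality of the slant product with respect to connecting homomorphisms, that is, from Theorem~\ref{slant naturality} applied to the augmentation sequence
$$
0\to I(\Gamma)\to\Z\Gamma\stackrel{\epsilon}\to\Z\to 0
$$
in the place of $0\to K\to L\to M\to 0$, with $K=I(\Gamma)$, $L=\Z\Gamma$, $M=\Z$, and with $k=0$.

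First I would unwind the hypothesis. That $1\in H^0(\Gamma)=H^0(\Gamma,\Z)$ is proper Lipschitz means there is a finite-dimensional free simplicial $\Gamma$-complex $P$ and a map $\alpha\colon\Gamma\times P\to\R^n$ satisfying (1)--(3), together with a class $[z]\in H_n(P:\Gamma;\Z)$ (we are in degree $k=n$, since the target lies in degree $n-k=0$) such that $\alpha_\cap([z])=[\bar\phi]/[z]=1$, where $\bar\phi=\bar\alpha^*(\omega)$ is the $\Gamma$-invariant cocycle built in Section~4.1 (equivalently Section~2.2), $C=C_*(\Delta^\infty(\Gamma))$ the normal resolution and $D=C_*(P)$. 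The key observation is that the very same cocycle $\bar\phi$ computes $\alpha_\cap$ on homology with coefficients in $I(\Gamma)$ as well: the slant-product formula $(\bar\phi/z')(\Delta)=\sum_\sigma\phi(\Delta\otimes\sigma)\lambda_\sigma$ makes sense verbatim for invariant chains $z'$ with coefficients in an arbitrary $\Z\Gamma$-module.

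Next I would feed $a^*=[\bar\phi]\in H^n(C\otimes D^\Gamma;\Z)$, the sequence above, and $k=0$ into Theorem~\ref{slant naturality}. Its square becomes
$$
\begin{CD}
H_{n}(D^\Gamma(\Z)) @>{[\bar\phi]/}>> H^0(\Gamma,\Z)\\
@V\bar\partial VV @V\bar\delta VV\\
H_{n-1}(D^\Gamma(I(\Gamma))) @>{[\bar\phi]/}>> H^1(\Gamma,I(\Gamma))\\
\end{CD}
$$
Here $\bar\delta\colon H^0(\Gamma,\Z)\to H^1(\Gamma,I(\Gamma))$ is exactly the connecting homomorphism $\delta$ defining the Berstein-Schwarz class, so $\bar\delta(1)=\beta_\Gamma$; and $\bar\partial$ is the connecting homomorphism of the coefficient homology sequence of $0\to D^\Gamma(I(\Gamma))\to D^\Gamma(\Z\Gamma)\to D^\Gamma(\Z)\to 0$, a sequence that is exact because $D_*$ is a complex of free $\Z\Gamma$-modules (as recorded just before Theorem~\ref{slant naturality}). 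Note also $H_{n-1}(D^\Gamma(I(\Gamma)))=H_{n-1}(P:\Gamma;I(\Gamma))$. Commutativity up to sign then yields
$$
\beta_\Gamma=\bar\delta\big([\bar\phi]/[z]\big)=\pm\,[\bar\phi]/\bar\partial[z]=\pm\,\alpha_\cap\big(\bar\partial[z]\big),
$$
with $\bar\partial[z]\in H_{n-1}(P:\Gamma;I(\Gamma))$. Since the image of $\alpha_\cap$ is a subgroup of $H^*(\Gamma,I(\Gamma))$, the sign is harmless: $\beta_\Gamma=\alpha_\cap(\mp\bar\partial[z])$ lies in it, so $\beta_\Gamma$ is proper Lipschitz, realized by the same data $(P,\alpha)$.

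I do not anticipate a real obstacle, since the corollary is essentially a restatement of Theorem~\ref{slant naturality}. The only points deserving a line of justification are: that the cocycle $\bar\phi$ coming from the proper-Lipschitz data for $1$ really satisfies the finiteness condition needed to apply Theorem~\ref{slant naturality} --- which it does, by the argument in Section~2.2 that made $\bar\alpha^*(\omega)$ well defined on infinite invariant chains --- and that $\bar\partial$ lands in the \emph{infinite}-chain invariant homology $H_{n-1}(P:\Gamma;I(\Gamma))$, which is immediate from the short exact sequence of invariant infinite chain complexes.
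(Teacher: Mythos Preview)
Your proposal is correct and follows essentially the same route as the paper: apply Theorem~\ref{slant naturality} to the augmentation sequence $0\to I(\Gamma)\to\Z\Gamma\to\Z\to 0$ with the cocycle $\bar\phi$ coming from the proper-Lipschitz data for $1$, and read off $\beta_\Gamma=\bar\delta(1)=\pm\alpha_\cap(\bar\partial[z])$. The paper's version is terser---it suppresses the sign and the finiteness remarks you spell out, and adds only the observation that properness of the $\Gamma$-action on $P$ forces $n>0$ so that $H_{n-1}$ makes sense---but the argument is the same.
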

\begin{proof}
We consider the short exact sequence $$0\to I(\Gamma)\to \Z\Gamma\stackrel{\epsilon}\to\Z\to 0$$ 
defined by the augmentation.
Let $\alpha:\Gamma\times P\to\R^n$ be a map satisfying 
conditions 1-3 required for proper Lipschitz property of $1\in H^0(\Gamma)$
and let $\bar\phi$ be the cocycle from the definition of $\alpha_\cap$. Since the action of $\Gamma$ on $P$ is proper, we have $n>0$.
 Thus, $H^0(\Gamma)\ni1=\alpha_\cap([z])$ for some $[z]\in H_n(P:\Gamma)$.
By Theorem~\ref{slant naturality} we obtain $\alpha_\cap(\bar\partial[z])=\bar\delta(1)=\beta_\Gamma$.
\end{proof}
\begin{prop}\label{large n}
Suppose that a class $b\in H^k(\Gamma,L)$ is proper Lipschitz via a map $\alpha:\Gamma\times P\to \R^n$. Then $b$ is proper Lipschitz via the map $\alpha\times 1:\Gamma\times P\times\R\to \R^n\times\R=\R^{n+1}$.
\end{prop}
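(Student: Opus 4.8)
The plan is to verify that $\alpha\times 1$ again satisfies conditions (1)--(3) and then to show that $b$ lies in the image of $(\alpha\times1)_\cap$ by slant--producing the given class for $\alpha$ with the fundamental class of the $\R$--factor. First I would fix the structures: give $P\times\R$ the product cell structure coming from $P$ and the standard structure on $\R$ (vertices $\Z$, edges $[m,m+1]$), and let $\Gamma$ act through its action on $P$ and trivially on $\R$, so that the action stays free and simplicial (cellular). Condition (1) for $\alpha\times1$ is immediate from invariance of $\alpha$, since the $\R$--coordinate is left fixed. For (2): any compact $Q\subset\R^{n+1}$ lies in a product $K\times[a,b]$, so $(\alpha\times1)|_{\gamma\times(P\times\R)}^{-1}(Q)$ is a closed subset of $(\alpha|_{\gamma\times P})^{-1}(K)\times[a,b]$, which is compact because $\alpha|_{\gamma\times P}$ is proper. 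For (3): along $\Gamma\times(p,s)$ the map $\alpha\times1$ has constant $\R$--coordinate $s$, so it differs from $\alpha|_{\Gamma\times p}$ only by a constant extra coordinate and hence is still $1$-Lipschitz. Thus $(\alpha\times1)_\cap\colon H_j(P\times\R:\Gamma;L)\to H^{n+1-j}(\Gamma,L)$ is defined by the construction of $\alpha_\cap$ given above.

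Next I would build the homology class. Write $b=\alpha_\cap([z])$ with $z=\sum_\sigma\lambda_\sigma\sigma$ a $\Gamma$-invariant infinite $(n-k)$-cycle on $P$ with coefficients in $L$, so $[z]\in H_{n-k}(P:\Gamma;L)$. Let $[\R]=\sum_{m\in\Z}[m,m+1]$ be the locally finite fundamental $1$-cycle of $\R$; it is $\Gamma$-invariant (trivial action on $\R$) and $\partial[\R]=0$. Then
$$
z\times[\R]=\sum_{\sigma,m}\lambda_\sigma\,(\sigma\times[m,m+1])
$$
is a $\Gamma$-invariant infinite $(n+1-k)$-cycle on $P\times\R$ with coefficients in $L$: equivariance of the coefficients follows from $\lambda_{\gamma\sigma}=\gamma\lambda_\sigma$, and $\partial(z\times[\R])=(\partial z)\times[\R]\pm z\times(\partial[\R])=0$. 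So it represents $[z\times[\R]]\in H_{n+1-k}(P\times\R:\Gamma;L)$.

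Then I would compute $(\alpha\times1)_\cap([z\times[\R]])$. Choose the generator $\omega'$ of $H^{n+1}_c(\R^{n+1})$ to be a cochain cross product $\omega\times\eta$, where $\omega$ is the generator of $H^n_c(\R^n)$ used for $\alpha$ and $\eta$ is a $1$-cocycle supported in $[-1,1]$ whose class generates $H^1_c(\R)$ with $\langle\eta,[\R]\rangle=1$; note that under the identification $\Delta^\infty(\Gamma)\times(P\times\R)=(\Delta^\infty(\Gamma)\times P)\times\R$ one has $\overline{\alpha\times1}=\bar\alpha\times\id_\R$. For a $k$-simplex $\Delta$ in $\Delta^\infty(\Gamma)$, the staircase triangulation of $\Delta\times\sigma\times[m,m+1]$ is the one induced by staircase-triangulating $\Delta\times\sigma$ and then taking the prism over $[m,m+1]$, so by the Eilenberg--Zilber formula $\langle\omega\times\eta,\,c\times c'\rangle=\pm\langle\omega,c\rangle\langle\eta,c'\rangle$ applied in the fixed degrees $n$ and $1$ one gets
$$
\bigl((\alpha\times1)_\cap[z\times[\R]]\bigr)(\Delta)
=\sum_{\sigma,m}\omega'\bigl((\bar\alpha\times\id_\R)|_{\Delta\times\sigma\times[m,m+1]}\bigr)\lambda_\sigma
=\pm\Bigl(\sum_\sigma\omega(\bar\alpha|_{\Delta\times\sigma})\lambda_\sigma\Bigr)\Bigl(\sum_{m}\langle\eta,[m,m+1]\rangle\Bigr).
$$
Since $\eta$ is supported in $[-1,1]$ the inner sum $\sum_m\langle\eta,[m,m+1]\rangle$ is finite and equals $\langle\eta,[\R]\rangle=1$, so the right side is $\pm(\alpha_\cap[z])(\Delta)$. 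Hence $(\alpha\times1)_\cap([z\times[\R]])=\pm b$; as the image of the slant product homomorphism $(\alpha\times1)_\cap$ is a subgroup of $H^k(\Gamma,L)$, it contains $b$, i.e. $b$ is proper Lipschitz via $\alpha\times1$.

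I expect the only genuine work to be in the third step: checking that the staircase (Eilenberg--Zilber) triangulation of the triple product is compatible with forming cross products, so that evaluation of $\omega\times\eta$ factors as the product of evaluations of $\omega$ and $\eta$, and tracking the resulting fixed sign. The first two steps are routine, and the ambiguous sign is harmless because we only need $\pm b$ to lie in the subgroup that is the image of the slant product.
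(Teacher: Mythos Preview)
Your proof is correct and follows essentially the same approach as the paper: cross the given cycle $z$ with the locally finite fundamental $1$-cycle $s=\sum_{m\in\Z}[m,m+1]$ of $\R$ and use a product cocycle $\omega\otimes\omega_0$ as the generator of $H^{n+1}_c(\R^{n+1})$. The paper's only simplification over your argument is to choose $\omega_0$ supported in the $1/4$-neighborhood of $1/2$, so that exactly the single term $m=0$ survives in the sum over $m$ and one obtains $(\bar\phi/z)(\Delta)$ on the nose without having to sum, track signs, or invoke Eilenberg--Zilber explicitly.
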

\begin{proof}
Here we consider the pul-back action of $\Gamma$ on $P\times R$ and we take $\bar\alpha=\alpha\times 1$.
Let $s=\sum_{m\in\Z}[m,m+1]$ be the infinite 1-chain for the natural simplicial complex structure on $\R$. Note that $s$ is a cycle.
Let $\omega_0$ be a generator of $H^1_c(\R)$ supported in $1/4$-neighborhood of $1/2$.
Suppose that $[b]=\alpha_\cap([z])$ for some cycle $z=\sum_\sigma\lambda_\sigma\sigma$. Then 
$$(b/z)(\Delta)=\sum_\sigma(\omega(\bar\alpha|\Delta\times\sigma)\lambda_\sigma=
\sum_\sigma(\omega\otimes\omega_0)((\bar\alpha\times 1)|\Delta\times\sigma\times[0,1])\lambda_\sigma=$$
$$\sum_{m\in\Z}\sum_\sigma(\omega\otimes\omega_0)((\bar\alpha\times 1)|\Delta\times\sigma\times[m,m+1])\lambda_\sigma=(b/(z\otimes s))(\Delta).$$
\end{proof}
\begin{cor}\label{2}
Suppose that the integral 0-dimensional cohomology class  $1\in H^0(\Gamma)$ is proper Lipschitz.
Then cohomology classes in $H^k(\Gamma)$ are Lipschitz for all $k$.
\begin{proof}
Given $i$, in view of Proposition~\ref{large n} we may assume that $1\in H^0(\Gamma)$ is realized via a map $\alpha:\Gamma\times P\to\R^n$ with $n>i$.
Let $1=\alpha_\cap([z_1])$ for $[z_1]\in H_n(P:\Gamma)$.
By induction on  $i$ we show that $(\beta_\Gamma)^{\otimes i}$ where $z_i=\bar\partial z_{i-1}$
for $i<n$. The base of induction, $i=1$ is Corollary~\ref{1}.

Assume that  $(\beta_\Gamma)^{\otimes i-1}=\alpha_\cap([z_{i-1}])$. We apply Theorem~\ref{slant naturality} to the short exact sequence
$$
0\to I(\Gamma)^{\otimes i}\to  I(\Gamma)^{\otimes i-1}\otimes\Z\Gamma \stackrel{1\otimes\epsilon}\to I(\Gamma)^{\otimes i-1}\to 0
$$ 
to obtain $\alpha_\cap(\bar\partial z_{i-1})=\pm\bar\delta(\beta_\Gamma^{\otimes i-1})=\beta_\Gamma^{\otimes i}$.
Then in view of the universality of $\beta_\Gamma$, by virtue of Proposition~\ref{coefficients},
all $k$-dimensional cohomology classes of $\Gamma$ are proper Lipschitz.

\end{proof}
\end{cor}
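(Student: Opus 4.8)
The plan is to bootstrap from the hypothesis to every cup power of the Berstein-Schwarz class and then conclude by invoking the Universality Theorem together with the coefficient homomorphism theorem (Theorem~\ref{coefficients}). Fix $k$. Since $1\in H^0(\Gamma)$ is proper Lipschitz, it is realized by a map $\alpha:\Gamma\times P\to\R^n$ satisfying conditions 1--3; applying Proposition~\ref{large n} finitely many times --- each time replacing $(P,\alpha)$ by $(P\times\R,\alpha\times 1)$ --- we may assume $n>k$. Thus $1=\alpha_\cap([z_0])$ for some $[z_0]\in H_n(P:\Gamma)$ represented by a top-dimensional $\Gamma$-invariant cycle $z_0$.

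Next I would prove by induction on $i$, for $0\le i\le k$, the statement that $(\beta_\Gamma)^{\otimes i}\in H^i(\Gamma,I(\Gamma)^{\otimes i})$ is proper Lipschitz, realized by the \emph{same} map $\alpha$ as $\alpha_\cap([z_i])$, where $z_i=\bar\partial z_{i-1}$ is the image of $[z_{i-1}]$ under the connecting homomorphism in the coefficient homology long exact sequence of
$$
0\to I(\Gamma)^{\otimes i}\to I(\Gamma)^{\otimes i-1}\otimes\Z\Gamma\stackrel{1\otimes\epsilon}{\to} I(\Gamma)^{\otimes i-1}\to 0
$$
(which for $i=1$ is the augmentation sequence $0\to I(\Gamma)\to\Z\Gamma\stackrel{\epsilon}{\to}\Z\to 0$). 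The base case $i=0$ is the hypothesis, as $(\beta_\Gamma)^{\otimes 0}=1$. Because $n>k$, the chain $z_i$ has dimension $n-i\ge n-k\ge 1$ for every $i\le k$, so $\bar\partial z_{i-1}$ is defined, represents a class in $H_{n-i}(P:\Gamma;I(\Gamma)^{\otimes i})$, and $\alpha_\cap(\bar\partial[z_{i-1}])\in H^i(\Gamma,I(\Gamma)^{\otimes i})$.

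For the inductive step, assume $(\beta_\Gamma)^{\otimes i-1}=\alpha_\cap([z_{i-1}])$. I would apply Theorem~\ref{slant naturality} with $C\otimes D^\Gamma$ in place of $C\otimes D$, with the cocycle $\bar\phi=\bar\alpha^*(\omega)$ occurring in the definition of $\alpha_\cap$ in place of $a^*$, and with the displayed short exact sequence as $0\to K\to L\to M\to 0$. The commuting square it furnishes yields $\alpha_\cap(\bar\partial[z_{i-1}])=\pm\,\bar\delta\big(\alpha_\cap([z_{i-1}])\big)=\pm\,\bar\delta\big((\beta_\Gamma)^{\otimes i-1}\big)$, and it then remains to identify $\bar\delta\big((\beta_\Gamma)^{\otimes i-1}\big)=(\beta_\Gamma)^{\otimes i}$. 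This is the defining property of the cup powers of the Berstein-Schwarz class: the connecting homomorphism of the sequence obtained by tensoring the augmentation sequence with $I(\Gamma)^{\otimes i-1}$ carries $(\beta_\Gamma)^{\otimes i-1}$ to $(\beta_\Gamma)^{\otimes i-1}\cdot\beta_\Gamma=(\beta_\Gamma)^{\otimes i}$, as one checks against the cup-product description of $(\beta_\Gamma)^{\otimes i}$ and the naturality of connecting homomorphisms under the module map obtained by tensoring with $I(\Gamma)^{\otimes i-1}$. The sign ambiguity is harmless, since replacing the generator $\omega$ of $H^n_c(\R^n)$ by $-\omega$ merely negates $\alpha_\cap$; note that for $i=1$ this step is precisely Corollary~\ref{1}.

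Carrying the induction to $i=k$ shows that $(\beta_\Gamma)^{\otimes k}\in H^k(\Gamma,I(\Gamma)^{\otimes k})$ is proper Lipschitz. By the Universality Theorem, every class $a\in H^k(\Gamma)=H^k(\Gamma,\Z)$ has the form $a=\phi^*\big((\beta_\Gamma)^{\otimes k}\big)$ for some $\Z\Gamma$-module homomorphism $\phi:I(\Gamma)^{\otimes k}\to\Z$, and by Theorem~\ref{coefficients} the image of a proper Lipschitz class under a coefficient homomorphism is again proper Lipschitz; hence $a$ is proper Lipschitz, which gives the assertion. I expect the main obstacle to be the inductive step of the previous paragraph --- namely verifying that Theorem~\ref{slant naturality} genuinely applies in this setting (that $\bar\phi=\bar\alpha^*(\omega)$ is a legitimate $\Gamma$-invariant cocycle on $C\otimes D^\Gamma$ and that $\bar\partial[z_{i-1}]$ really lies in the stated invariant homology group), and pinning down the identity $\bar\delta\big((\beta_\Gamma)^{\otimes i-1}\big)=(\beta_\Gamma)^{\otimes i}$; everything else is formal bookkeeping with slant products and coefficient long exact sequences.
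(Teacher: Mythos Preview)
Your proposal is correct and follows essentially the same route as the paper's proof: use Proposition~\ref{large n} to enlarge $n$, then induct via Theorem~\ref{slant naturality} applied to the short exact sequences $0\to I(\Gamma)^{\otimes i}\to I(\Gamma)^{\otimes i-1}\otimes\Z\Gamma\to I(\Gamma)^{\otimes i-1}\to 0$ to show each $(\beta_\Gamma)^{\otimes i}$ is proper Lipschitz, and conclude by Universality together with Theorem~\ref{coefficients}. Your indexing (starting the induction at $i=0$ with $z_0$ realizing $1$) is slightly cleaner than the paper's, and your remarks on why the dimensions permit applying $\bar\partial$ and why the sign is harmless are welcome elaborations, but the argument is the same.
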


\begin{thm}\label{product1}
The cup product of proper Lipschitz cohomology classes is proper Lipschitz.
\end{thm}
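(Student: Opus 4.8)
\medskip
\noindent\emph{Proof outline.}
The plan is to realize the cup product by a product construction. Suppose $a=(\alpha_1)_\cap([z_1])\in H^{k_1}(\Gamma,L_1)$ is witnessed by a map $\alpha_1\colon\Gamma\times P_1\to\R^{n_1}$ satisfying conditions $(1)$--$(3)$ together with a $\Gamma$-invariant cycle $z_1=\sum_{\sigma_1}\lambda^1_{\sigma_1}\sigma_1$ of degree $n_1-k_1$ in $D_1^\Gamma(L_1)$, $D_1=C_*(P_1)$, and likewise let $b=(\alpha_2)_\cap([z_2])\in H^{k_2}(\Gamma,L_2)$ be witnessed by $\alpha_2\colon\Gamma\times P_2\to\R^{n_2}$ and a cycle $z_2=\sum_{\sigma_2}\lambda^2_{\sigma_2}\sigma_2$ of degree $n_2-k_2$. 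I would take $P=P_1\times P_2$ with the diagonal $\Gamma$-action, which is free because each factor action is free, and which is again a finite-dimensional locally finite complex; I give it the product CW structure, using the remark that the construction works with cellular chain complexes. Set $\alpha=\alpha_1\times\alpha_2\colon\Gamma\times P\to\R^{n_1}\times\R^{n_2}=\R^{n}$, $n=n_1+n_2$. Then $\alpha$ is diagonally invariant; $\alpha|_{\gamma\times P}=\alpha_1|_{\gamma\times P_1}\times\alpha_2|_{\gamma\times P_2}$ is proper as a product of proper maps; and $\alpha|_{\Gamma\times(p_1,p_2)}$ is $\sqrt2$-Lipschitz, hence $1$-Lipschitz after rescaling $\R^n$. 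So $\alpha$ satisfies $(1)$--$(3)$. As a generator of $H^n_c(\R^n;\Z)$ I would use $\omega=\omega_1\otimes\omega_2$, a cocycle supported in the bounded set $B_1\times B_2$; it generates by the K\"unneth theorem.

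Next I would exhibit the homology class. Identifying $C_*(P)\cong C_*(P_1)\otimes C_*(P_2)$ with the diagonal action, form the cross product $z_1\times z_2=\sum_{\sigma_1,\sigma_2}(\lambda^1_{\sigma_1}\otimes\lambda^2_{\sigma_2})(\sigma_1\times\sigma_2)\in D^\Gamma(L_1\otimes L_2)$, where $L_1\otimes L_2$ carries the diagonal action. It is a $\Gamma$-invariant infinite chain and a cycle, since $\partial(z_1\times z_2)=\partial z_1\times z_2\pm z_1\times\partial z_2=0$, so it represents a class in $H_{n-k_1-k_2}(P:\Gamma;L_1\otimes L_2)$. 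The key assertion is
$$
\alpha_\cap\bigl([z_1\times z_2]\bigr)=\pm\,a\cup b\qquad\text{in }H^{k_1+k_2}(\Gamma,L_1\otimes L_2),
$$
which finishes the proof, since the right-hand side is then proper Lipschitz by definition.

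To prove the assertion I would argue at the cochain level. Writing $\bar\phi,\bar\phi_1,\bar\phi_2$ for $\bar\alpha^*(\omega),\bar\alpha_1^*(\omega_1),\bar\alpha_2^*(\omega_2)$, the essential point is that $\bar\alpha$ uses a \emph{single} copy of $\Delta^\infty(\Gamma)$: restricted to $\Delta\times\sigma_1\times\sigma_2$ it is the composite of the diagonal $\Delta\to\Delta\times\Delta$ on the first factor with $\bar\alpha_1\times\bar\alpha_2$ under $\Delta\times\Delta\times\sigma_1\times\sigma_2\cong(\Delta\times\sigma_1)\times(\Delta\times\sigma_2)$. Dually, $\bar\phi$ is the external product of $\bar\phi_1$ and $\bar\phi_2$ precomposed with the Alexander--Whitney diagonal approximation $C_*\to C_*\otimes C_*$ of the resolution $C_*=C_*(\Delta^\infty(\Gamma))$ (and the evident reshuffling). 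Since the slant product is natural, since the diagonal on $\Delta^\infty(\Gamma)$ induces the coproduct $[\gamma_0,\dots,\gamma_k]\mapsto\sum_{p+q=k}[\gamma_0,\dots,\gamma_p]\otimes[\gamma_p,\dots,\gamma_k]$, and since $\omega_i$, being a top-degree cocycle, annihilates every summand except the one with $p=n_1-\dim\sigma_1$ and $q=n_2-\dim\sigma_2$, one is left with
$$
\bigl(\bar\phi/(z_1\times z_2)\bigr)([\gamma_0,\dots,\gamma_{k_1+k_2}])=\pm\,(\bar\phi_1/z_1)([\gamma_0,\dots,\gamma_{k_1}])\otimes(\bar\phi_2/z_2)([\gamma_{k_1},\dots,\gamma_{k_1+k_2}]).
$$
The right-hand side is precisely the Alexander--Whitney cup product of the cocycles $\bar\phi_1/z_1\in\Hom_\Gamma(C_{k_1},L_1)$ and $\bar\phi_2/z_2\in\Hom_\Gamma(C_{k_2},L_2)$, with the diagonal module structure on $L_1\otimes L_2$; since these represent $a$ and $b$, passing to cohomology classes gives the assertion.

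The step I expect to be the main obstacle is the bookkeeping: tracking the signs coming from the staircase triangulations of the products $\Delta\times\sigma$ and from the Alexander--Whitney coproduct, and checking that these triangulations match up under the map which is the diagonal on the $\Delta^\infty(\Gamma)$-factor and the identity on the $\sigma_i$, so that the factorization above is an actual identity of singular chains; this is the standard Eilenberg--Zilber/Alexander--Whitney compatibility. One must also check that the defining sums for $\bar\phi/(z_1\times z_2)$ are finite, but this is immediate from properness of $\bar\alpha$ on $\Delta\times P$, exactly as in Section~2. Conceptually nothing beyond the diagonal factorization is needed.
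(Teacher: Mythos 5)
Your construction is the same product construction the paper uses: take $P=P_1\times P_2$ with the diagonal action, the product map $\alpha_1\times\alpha_2$, and the cross product of the two invariant cycles, then show the resulting $\alpha_\cap$ gives the cup product. The one real divergence is how the cup product is represented at the cochain level. You keep the resolution $C_*(\Delta^\infty(\Gamma))$ and therefore have to precompose with an Alexander--Whitney diagonal and match staircase triangulations against it, which you correctly flag as the delicate bookkeeping. The paper instead resolves $\Gamma$ by the product complex $C_*(\Delta^\infty(\Gamma))\otimes C_*(\Delta^\infty(\Gamma))$ (with diagonal $\Gamma$-action), for which the cup-product cocycle literally coincides with the cross-product cocycle viewed as a $\Gamma$-equivariant rather than $(\Gamma\times\Gamma)$-equivariant map; the factorization $\bar\phi=\bar\phi_1\otimes\bar\phi_2$ is then immediate and the AW/Eilenberg--Zilber compatibility disappears. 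Your route stays closer to the literal definition (a map from $\Gamma\times P$) at the cost of the EZ/AW signs; the paper's route trades a choice of resolution for a cleaner identity of cocycles. Both are valid, and each is recoverable from the other: the paper's $\bar\alpha$ restricted to the diagonal $\Gamma\subset\Gamma\times\Gamma$ is exactly your $\alpha_1\times\alpha_2$.
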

\begin{proof}
Let $\bar\alpha_1:\Delta^\infty(\Gamma)\times P_1\to\R^{n_1}$ and $\bar\alpha_2:\Delta^\infty(\Gamma)\times P_1\to\R^{n_2}$ be  the maps that realize Lipschitz classes $(\alpha_1)_\cap([z_1])\in H^{k_1}(\Gamma,L)$
and  $(\alpha_2)_\cap([z'])\in H^{k_2}(\Gamma,M)$ with $z=\sum\lambda_\sigma\sigma$ and $z'=\sum\mu_{\sigma'}\sigma'$. We take the product $\Delta^\infty(\Gamma)\times \Delta^\infty(\Gamma)$
with diagonal action of $\Gamma$ for the classifying space $E\Gamma$ and cosider a CW-complex $P=P_1\times P_2$ with the diagonal $\Gamma$-action. We
define $$\bar\alpha:(\Delta^\infty(\Gamma)\times \Delta^\infty(\Gamma))\times (P_1\times P_2)\to \R^{n_1}\times\R^{n_2}=\R^{n_1+n_2}$$
as $\bar\alpha(x_1\times x_2,y_1\times y_2)=\bar\alpha_1(x_1,y_1)\times\bar\alpha_2(x_2,y_2)$. We show that $$\alpha_\cap([z\otimes z'])=(\alpha_1)_\cap([z])(\alpha_2)_\cap([z']).$$
Let $k=k_1+k_2$ and $n=n_1+n_2$. Let $\Delta^i\times\Delta^{k-i}$ be a $k$-cell in $\Delta^\infty(\Gamma)\times \Delta^\infty(\Gamma)$. The cocycle $\phi$ from the definition of $\alpha_\cap$  is given by the formula $$\phi((\Delta_1\times\Delta_2)\otimes(\sigma\times\sigma'))=(\omega_1\otimes\omega_2)(\bar\alpha|(\Delta_1\times\Delta_2)\times(\sigma\times\sigma'))=$$
$$
(\omega_1\otimes\omega_2)(\bar\alpha_1|(\Delta_1\times\sigma)\times\bar\alpha_2|(\Delta_2\times\sigma'))=
\omega_1(\bar\alpha_1|(\Delta_1\times\sigma_1)\otimes \omega_2(\bar\alpha_2|(\Delta_2\times\sigma_2)
$$
$$
=\phi_1(\Delta_1\otimes\sigma)\phi_2(\Delta_2\otimes\sigma').$$
We compare the cocycles $\bar\phi/(z_1\otimes z_2)$ and $(\bar\phi_1/z_1)\cup(\bar\phi_2/z_2)$ which are both elements of  $$Hom_\Gamma(C(\Delta^\infty(\Gamma))\otimes C(\Delta^\infty(\Gamma)),L\otimes M).$$
Since $z_1\otimes z_2=\sum_{\sigma,\sigma'}(\lambda_\sigma\otimes\mu_{\sigma'})(\sigma\times\sigma')$, we obtain
$$
(\bar\phi/z\otimes z')(\Delta_1\times\Delta_2)=\sum_{\sigma,\sigma'}\phi((\Delta_1\times\Delta_2)\otimes(\sigma\times\sigma'))\lambda_\sigma\otimes\mu_{\sigma'}.
$$
Then we evaluate the cup product cocycle using the fact that when we do it with respect to the product resolution $C_*(\Delta^\infty(\Gamma))\otimes C_*(\Delta^\infty(\Gamma))$  of $\Gamma$
the cup product cocycle coincides with the cross product cocycle treated as a $\Gamma$-equivariant homomorphism instead of $(\Gamma\times\Gamma)$-equivariant (see~\cite{Br}).  Thus,
$$((\bar\phi_1/z_1)\cup(\bar\phi_2/z_2))(\Delta_1\times\Delta_2)=
\bar\phi_1/(\sum_{\sigma}\lambda_\sigma\sigma)(\Delta_1)\otimes
 \bar\phi_2/(\sum_{\sigma'}\mu_{\sigma'}\sigma')(\Delta_2)
$$
$$
=\sum_{\sigma}\phi_1(\Delta_1\times\sigma)\lambda_{\sigma}\otimes \sum_{\sigma'}\phi_2(\Delta_2\times\sigma')\mu_{\sigma'}=\sum_{\sigma,\sigma'}\phi((\Delta_1\times\Delta_2)\otimes(\sigma\times\sigma'))\lambda_\sigma\otimes\mu_{\sigma'}.
$$
Therefore, the cocycles $\bar\phi/(z_1\otimes z_2)$ and $(\bar\phi_1/z_1)\cup(\bar\phi_2/z_2)$ coincide.
\end{proof}

\subsection{General Lipschitz cohomology} Now in the definition of Lipschitz cohomology we drop the assumption of properness of the action of $\Gamma$ on $P$.
We assume that $P$ is a simplicial complex and $\Gamma$ acts by simplicial homeomorphisms.
Still the equivariant infinite chains $D^\Gamma$ for $D=C_*(P)$ are  defined and the homomorphism $\alpha_\cap$ can be defined by the same formula.
Thus, $$
(\bar\phi/z)(\Delta)=\sum_{\sigma\in S_{n-k}}\phi(\Delta\otimes \sigma)\lambda_{\sigma}
$$
for $\phi(\Delta\otimes\sigma)=\omega(\bar\alpha|\Delta\times\sigma)$
and $\alpha_\cap=[\bar\phi]$.

The difference is in formulas for $\bar\phi/z$ in the case when the chain $z=\sum_{\gamma\in\Gamma}\lambda_{\gamma\sigma}(\gamma\sigma)$
is defined by a singhle orbit. In the proper cohomology case it is
$$
\bar\phi/z(\Delta)=\sum_{\gamma\in\Gamma}\omega(\bar\alpha|\Delta\times\gamma\sigma)\lambda_{\gamma\sigma}
$$
whereas in the general case
$$\bar\phi/z(\Delta)=\sum_{\gamma\Gamma_\sigma\in\Gamma/\Gamma_\sigma}\omega(\bar\alpha|\Delta\times\gamma\sigma)\lambda_{\gamma\sigma}$$
where $\Gamma_\sigma$ is the isotropy group of $\sigma$ and the sum is indexed by the set $\Gamma/\Gamma_\sigma$ of cosets of $\Gamma_\sigma$.

The following two examples are modifications of examples from~\cite{CGM}
\begin{ex}
The integral 0-dimensional cohomology class of $\Gamma$ is Lipschitz for any group $\Gamma$.
\end{ex}
\begin{proof}
Consider the case when $P$ is a point with a trivial action of $\Gamma$ and with the constant map $\alpha:\Gamma\times P\to\R^0$.
\end{proof}
\begin{ex}
All integral 1-dimensional cohomology classes of any finitely generated  group $\Gamma$ are Lipschitz.
\end{ex}
\begin{proof}
Let $f:\Gamma\to\Z$ be a 1-cocycle. We take $P=\R$ with the action of $\Gamma$ defined as $\gamma(x)=x+f(x)$. Define $\alpha:\Gamma\times\R\to\R$  as $\alpha(\gamma,x)=-\gamma^{-1}(x)$. The axioms 1-2 are obviously satisfied. Let $S$ be a finite generating for $\Gamma$ and let $\lambda=\max\{|f(s)\mid s\in S$.
Then $$|\alpha(\gamma,x)-\alpha(\gamma s,x)=|f(\gamma^{-1})-f((\gamma s)^{-1})|=|f(\gamma s)-f(\gamma)|=|f(s)|\le\lambda d_S(\gamma s,\gamma).$$ This implies that $\alpha$ is $\lambda$-Lipschitz.
We resale $\R$ accordingly to get $\alpha$ 1-Lipschitz. 

Suppose that the generator $\omega\in H_c^1(\R)$ has support concentrated in a small neighborhood around $1/2\in\R$.
Le $z=\sum _{m\in\Z}m$ be an infinite simplicial  0-chain with respect to the triangulation of $\R$ where $\Z$ is the 0-skeleton. Then $\alpha_\cap([z])$ is defined by the cocycle
$\bar\phi/z$ which takes value
$$
(\bar\phi/z)([e,\gamma])=\sum_m\omega(\bar\alpha|[1,\gamma]\times m)=\sum_m\omega([\alpha(e,m),\alpha(\gamma,m)])=\sum_m\omega([-m,f(\gamma)-m]).
$$
Since for $r\in\Z$ only $r$ different integer translates of the interval $[0,r]$ can hit 1/2, we obtain that $(\bar\phi/z)([e,\gamma])=f(\gamma)$.
Therefore, $\alpha_\cap([z])=[f]$.
\end{proof}
\begin{question}
Is every integral 2-dimensional cohomology class of a finitely presented group Lipschitz?
\end{question}
An affirmative answer to this question would imply the Novikov conjecture for 2-dimesional classes which is already known~\cite{Ma},\cite{HS}.
\begin{rem}
We note that in the case when the action of $\Gamma$ on a simplicial complex $P$ is not free the sequence
$
0\to D_*^\Gamma(K)\to D_*^\Gamma(L)\to  D_*^\Gamma(M)\to 0
$
 is not exact. As the result, Theorem~\ref{slant naturality} does not hold any more.
\end{rem}
The proof of the following theorem coincides with the proof of Proposition~\ref{product1}.
\begin{thm}\label{product2}
The product of two Lipschitz cohomology classes is Lipschitz.
\end{thm}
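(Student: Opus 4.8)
The plan is to run the construction and the cochain-level computation from the proof of Theorem~\ref{product1} verbatim, and to check that the only two features used there — the existence of the slant-product homomorphisms $(\alpha_i)_\cap$ and the finiteness of the sums defining them — survive when the $\Gamma$-actions on the parameter spaces are no longer proper. So suppose $a_1=(\alpha_1)_\cap([z_1])\in H^{k_1}(\Gamma,L)$ and $a_2=(\alpha_2)_\cap([z_2])\in H^{k_2}(\Gamma,M)$ are general Lipschitz classes, realized by maps $\bar\alpha_i\colon\Delta^\infty(\Gamma)\times P_i\to\R^{n_i}$ and invariant infinite cycles $z_i=\sum_\sigma\lambda^{(i)}_\sigma\sigma$. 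As in Theorem~\ref{product1} I would form the CW complex $P=P_1\times P_2$ with the diagonal $\Gamma$-action, take $\Delta^\infty(\Gamma)\times\Delta^\infty(\Gamma)$ with the diagonal action as a model of $E\Gamma$, set $\bar\alpha=\bar\alpha_1\times\bar\alpha_2$, and let $\omega=\omega_1\otimes\omega_2$ generate $H^n_c(\R^n)$ with $n=n_1+n_2$ (its support lies in a product of balls, hence in a ball after rescaling).

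First I would check that $\bar\alpha$ is legitimate data for the general slant product. Diagonal invariance of $\bar\alpha_1\times\bar\alpha_2$ is immediate, and the Lipschitz condition holds after rescaling $\R^n$, since a product of two $1$-Lipschitz maps is Lipschitz. The only point needing attention is finiteness of $\sum_{\sigma_1,\sigma_2}\phi\bigl((\Delta_1\times\Delta_2)\otimes(\sigma_1\times\sigma_2)\bigr)$, the sum running over all cells $\sigma_i$ of the appropriate dimension in $P_i$; but, exactly as in Theorem~\ref{product1}, the staircase triangulation of the product yields $\phi\bigl((\Delta_1\times\Delta_2)\otimes(\sigma_1\times\sigma_2)\bigr)=\phi_1(\Delta_1\otimes\sigma_1)\,\phi_2(\Delta_2\otimes\sigma_2)$, so a term is nonzero only when each factor is, and the double sum factors over the two finite index sets on which $\phi_1$ and $\phi_2$ are supported. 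Hence $\alpha_\cap$ is defined on $H_*(P\colon\Gamma;L\otimes M)$.

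Next I would verify that $z_1\otimes z_2=\sum_{\sigma_1,\sigma_2}(\lambda^{(1)}_{\sigma_1}\otimes\lambda^{(2)}_{\sigma_2})(\sigma_1\times\sigma_2)$ is a $\Gamma$-invariant infinite cycle on $P_1\times P_2$ for the diagonal action: the identity $\gamma(\lambda^{(1)}_{\sigma_1}\otimes\lambda^{(2)}_{\sigma_2})=\lambda^{(1)}_{\gamma\sigma_1}\otimes\lambda^{(2)}_{\gamma\sigma_2}$ follows from invariance of each factor, and in particular the coefficient of $\sigma_1\times\sigma_2$ is fixed by its isotropy group $\Gamma_{\sigma_1}\cap\Gamma_{\sigma_2}=\Gamma_{\sigma_1\times\sigma_2}$, so the chain is well defined even though the diagonal action need not be free. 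With this in place the computation of the proof of Theorem~\ref{product1} applies unchanged: using that the cup product taken with respect to the product resolution $C_*(\Delta^\infty(\Gamma))\otimes C_*(\Delta^\infty(\Gamma))$ is the cross product regarded as a $\Gamma$-equivariant homomorphism, one gets $\bar\phi/(z_1\otimes z_2)=(\bar\phi_1/z_1)\cup(\bar\phi_2/z_2)$ on the nose, whence $\alpha_\cap([z_1\otimes z_2])=a_1\,a_2$.

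I do not expect a genuine obstacle: the proof of Theorem~\ref{product1} nowhere used properness or freeness of the $\Gamma$-action on the parameter spaces in an essential way — only diagonal invariance, the Lipschitz bound, and finiteness of the defining sums, all inherited by the product. The one place where non-freeness could conceivably intrude is the orbitwise rewriting of the slant product, which in the general case is indexed by cosets $\Gamma/\Gamma_\sigma$ rather than by $\Gamma$; but the identity above is established with the sum written over all cells of $P_1\times P_2$ at once, so this reindexing never enters and the argument reduces to formal bookkeeping.
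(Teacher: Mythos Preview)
Your proposal is correct and follows exactly the paper's approach: the paper's entire proof of Theorem~\ref{product2} is the single sentence ``The proof of the following theorem coincides with the proof of Proposition~\ref{product1},'' and you have carefully verified that the argument of Theorem~\ref{product1} carries over without using properness or freeness of the $\Gamma$-action on $P$. Your explicit check that the factorization $\phi=\phi_1\phi_2$ yields finiteness of the defining sums and that $z_1\otimes z_2$ is a well-defined $\Gamma$-invariant cycle fills in precisely the details the paper leaves implicit.
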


\begin{question}
For which groups $\Gamma$ the Berstein-Schwarz class is Lipschitz?
\end{question}
The main result of~\cite{CGM} and  the Universality Theorem together with Theorem~\ref{coefficients} and Theorem~\ref{product2} imply the Novikov conjecture for such groups.

\end{document}